\renewcommand{\d}[1]{\ensuremath{\textnormal{d}#1}}
\newcommand{\cE}{\mathcal{E}}
\newcommand{\cF}{\mathcal{F}}
\def\beq{\begin{equation}}
\def\eeq{\end{equation}}
\def\bq{\begin{quote}}
\def\eq{\end{quote}}
\def\ben{\begin{enumerate}}
\def\een{\end{enumerate}}
\def\bit{\begin{itemize}}
\def\eit{\end{itemize}}
\def\r|{\right|}
\newcommand\Z{\mathbbm{Z}}
\newcommand\R{\mathbbm{R}}
\theoremstyle{plain}
\newtheorem{thm}{Theorem}[section]
\newtheorem{lem}[thm]{Lemma}
\newtheorem{cor}[thm]{Corollary}
\newtheorem{defi}[thm]{Definition}
\theoremstyle{definition}
\newtheorem{rem}{Remark}[section]
\renewcommand{\leq}{\leqslant}
\renewcommand{\geq}{\geqslant}
\title{Stability for the logarithmic Hardy-Littlewood-Sobolev Inequality with application to the Keller-Segel equation}
\author{Eric A Carlen\footnote{Partially supported by NSF grant DMS-2055282}\\ Department of Mathematics, Rutgers University\\ Hill Center for the Mathematical Sciences, 110 Freylinghuysen Rd.\\
Piscataway NJ 08854-8019, USA}
\begin{document}

\maketitle

\begin{abstract}
We apply a duality method to prove an optimal stability theorem for the logarithmic Hardy-Littlewood-Sobolev inequality, and we apply it to the estimation of the rate of approach to equilibrium for the critical mass Keller-Segel system.  
\end{abstract}

\section{Introduction}

The logarithmic Hardy-Littlewood-Sobolev (log HLS) inequality is a limiting case of the Hardy-Littlewood-Sobolev inequality established in its sharp form
 by Lieb \cite{L83}. 

For a non-negative measurable function $f$  on $\R^d$, $d\ge 1$, and a number
$\lambda$ with 
$0 \le \lambda < d$, the quadraric functional  $I_\lambda(f)$ given by
\begin{equation*}%\label{ildef}
I_\lambda(f) := \int_{\R^d}\int_{\R^d} f(x)\frac{1}{|x-x'|^\lambda}f(x')\d x \d x' \ ,
\end{equation*}
 is well defined, though it may take the value $+\infty$.   
For $1\le p < \infty$, let $L^p(\R^d)$ denote the corresponding Lebesgue space with  
$\|g\|_p$ denoting the  norm of $g\in L^p(\R^d)$.

A fundamental theorem of Hardy, Littlewood and Sobolev   provides an upper bound on $I_\lambda(f)$ in terms of 
$\|f\|_{p(\lambda)}$ where 
\begin{equation*}%\label{pvaldef}
p(\lambda) = \frac{2d}{2d- \lambda}\ .
\end{equation*}
That is, there is some finite $C_{\lambda,d}$ constant such that  $I_\lambda(f) \leq C_{\lambda,d}\|f\|_{p(\lambda)}^2$ for all $f\in L^p(\R^d)_{p(\lambda)}$.
Lieb's Theorem  \cite{L83} gives the best constant and all of  the cases of equality:

\begin{thm}[Lieb, 1983]\label{shls}
For  all $\lambda>0$ and all non-negative measurable functions $f$
with $\|f\|_{p(\lambda)} < \infty$, 
\begin{equation*}%\label{HLS}
I_\lambda(f) \le   \left(\frac{I_\lambda(h,h)}{\|h\|_{p(\lambda)}^2}\right)\|f\|_{p(\lambda)}^2
 \end{equation*}
where
${\displaystyle h(x) = \left(\frac{1}{1+|x|^2}\right)^{d/p}}$.
There is equality if and only if for some $x_0\in \R^d$ and $s\in \R_+$, $f$ is a non-zero multiple of $h(x/s-x_0)$ for some $s>0$ and some $x_0\in \R^d$. 
\end{thm}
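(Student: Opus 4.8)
The plan is to recast the inequality as a conformally invariant variational problem on the sphere $S^d$ and then run a competing-symmetries argument. Write $p = p(\lambda) = 2d/(2d-\lambda)$, so that $d/p = d - \lambda/2$. Under stereographic projection $S^d \setminus \{N\} \to \R^d$, two points $\zeta,\zeta' \in S^d$ with images $x,x'$ satisfy the chordal identity $|\zeta - \zeta'|^2 = 4|x-x'|^2/[(1+|x|^2)(1+|x'|^2)]$, while $\d\sigma(\zeta) = (2/(1+|x|^2))^d\,\d x$. Setting $F(\zeta) = \big(\tfrac{1}{2}(1+|x|^2)\big)^{d-\lambda/2} f(x)$, a direct substitution — using precisely the relation $(d-\lambda/2)\,p = d$ that defines $p(\lambda)$ — converts the two functionals into
\[
I_\lambda(f) = \int_{S^d}\int_{S^d} F(\zeta)\,|\zeta-\zeta'|^{-\lambda}\,F(\zeta')\,\d\sigma(\zeta)\,\d\sigma(\zeta'), \qquad \|f\|_{p} = \|F\|_{L^p(S^d)} .
\]
In these variables the quotient $\Phi(f) = I_\lambda(f)/\|f\|_{p}^2$ is manifestly invariant under the isometry group $\mathrm{SO}(d+1)$ of $S^d$, and the claimed optimizer $h(x) = (1+|x|^2)^{-(d-\lambda/2)}$ corresponds to the \emph{constant} function on $S^d$. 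Thus the theorem is equivalent to the assertion that, at fixed $\|F\|_{p}$, the constant function maximizes $\int\!\int F\,|\zeta-\zeta'|^{-\lambda}F$.

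To locate this maximizer I would use two competing operations that each leave $\|f\|_{p}$ unchanged and never decrease $\Phi$. The first is symmetric decreasing rearrangement $R: f \mapsto f^*$ on $\R^d$: since $|x|^{-\lambda}$ is already symmetric decreasing, the Riesz rearrangement inequality gives $I_\lambda(f) \le I_\lambda(f^*)$, while $\|f^*\|_{p} = \|f\|_{p}$. The second is a single fixed conformal map $M$ — the pullback to $\R^d$ of a rotation of $S^d$ by $\pi/2$ that carries the pole of symmetrization into the equator — under which $\Phi$ is exactly invariant, being inherited from rotation invariance on the sphere. Neither operation alone suffices (rearrangement piles mass at the origin, i.e. the south pole, while the rotation moves that peak off-center), but their composition $T = R\circ M$ sets up a competition whose only stationary states should be the optima.

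The core step is then the competing-symmetries lemma. Starting from any admissible $f$, the sequence $f_n = T^n f$ satisfies $\Phi(f_{n+1}) \ge \Phi(f_n)$; I would argue it is precompact in $L^p$ (the uniform symmetric-decreasing structure produced by $R$, together with the pinning of the center by the \emph{fixed} choice of $M$, prevents escape of mass to $0$ or $\infty$), so that along a subsequence it converges strongly to a limit $g$ with $Tg = g$ modulo the symmetry group. A function fixed by both $R$ and $M$ must be radially symmetric decreasing and invariant under the chosen rotation of the sphere; I would then show these two requirements force $g$, up to translation, dilation, and scalar multiple, to equal $h$. Passing $\Phi(f) \le \lim_n \Phi(f_n) = \Phi(g) = \Phi(h)$ to the limit yields the bound with the asserted sharp constant.

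I expect the genuine difficulty to lie in two places. First, the convergence and fixed-point identification inside the competing-symmetries lemma: proving precompactness of $\{f_n\}$ and ruling out stationary states other than $h$ is where the real work sits, essentially the argument of Carlen and Loss. Second, the equality analysis: tracing equality back through the Riesz rearrangement step requires the characterization of equality in the rearrangement inequality (strict unless $f$ is already a translate of a symmetric decreasing function), which, combined with conformal invariance, is exactly what promotes "constant on $S^d$" to "$f$ is a conformal image of $h$" and thereby produces the full family $f = c\,h(x/s - x_0)$. An alternative route — Lieb's original one — would instead establish existence of a maximizer directly (handling the noncompactness generated by the conformal group), derive its Euler–Lagrange equation, and solve that equation using conformal symmetry; I regard the competing-symmetries argument as cleaner because it sidesteps the regularity theory for the Euler–Lagrange equation.
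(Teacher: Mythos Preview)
The paper does not contain a proof of this theorem: it is stated as Lieb's 1983 result with a citation to \cite{L83}, and the subsequent text immediately moves on to discussing the special case $\lambda = d-2$ and the limiting log HLS inequality. There is therefore no ``paper's own proof'' to compare your proposal against.

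That said, your outline is a faithful sketch of the competing-symmetries method of Carlen and Loss, which the paper does mention (in connection with \cite{CL90,CL92}) as an alternative to Lieb's original argument. You correctly identify the conformal change of variables that transports the problem to $S^d$, the two competing operations (Riesz rearrangement on $\R^d$ and a fixed half-turn of the sphere), and the two places where the real analytic work lies: precompactness of the iterates and the identification of joint fixed points. You also correctly flag that the equality statement requires the strict form of the Riesz rearrangement inequality. As a proof \emph{proposal} this is accurate in spirit; as a proof it is of course incomplete exactly where you say it is, and those steps are nontrivial --- the precompactness in particular requires a careful argument that the fixed rotation prevents concentration at the poles. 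Lieb's own route, which you allude to at the end, instead proves existence of an optimizer by a concentration-compactness type argument and then classifies solutions of the Euler--Lagrange equation using the conformal symmetry; the paper cites but does not reproduce either argument.
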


The cases in which $\lambda = d-2$ are particularly important since for $d\geq 3$,
\begin{equation*}
\langle f, (-\Delta)^{-1} f\rangle _{L^2(\R^d)} = \frac{1}{|S^{d-1}|}\int_{\R^d}\int_{\R^d} f(x)\frac{1}{|x-y|^{d-2}}f(y)\d x\d y\ ,
\end{equation*}
where $|S^{d-1}|$ denotes the $d-2$ dimensional volume of the unit sphere in $\R^d$. 
Hence, up to a constant, $I_{d-2}(f,f)$ is the Newtonian potential of $f$.  
For $d=2$, there is an interesting and somewhat subtle limiting case of the HLS inequality, known as the log HLS Inequality.  As is well known, 
if $f\in C_0^\infty(\R^2)$,
and $u$ defined by
\begin{equation}\label{uf}
u(x) = \frac{1}{2\pi}\int_{\R^2} \ln |x-y| f(y)\d y\ ,
\end{equation}
then $\Delta u(x) = f(x)$.
On this basis, one might be tempted to conclude that 
\begin{equation}\label{uf2}
-\frac{1}{4\pi}\int_{\R^2}f(x)\ln|x-y|f(y)\d x \d y = \frac{1}{2}\langle f,(-\Delta)^{-1}f\rangle_{L^2(\R^2)} = \frac12\int_{\R^d}|\nabla u(x)|^2{\rm d}x
\end{equation}
for all $f$ in some dense subset of $L^1(\R^2)$, but this is not correct. 
When $u(x)$ is given by \eqref{uf}, then $\lim_{x\to\infty} |x||\nabla u(x)| = \int_{\R^2}f(x)\d x$,
and hence $|\nabla u(x)|^2$ is not integrable when $\int_{\R^d}f(x){\rm d}x \neq 0$.  Worse still, since $\ln|x-y|$ is unbounded both above and below,  the integral on the left in \eqref{uf2} is not well-defined for all non-negative $f\in L^1(\R^2)$. 

However, for $\lambda = 0$ and all $d$, $p(0) =1$, and the HLS inequality reduces to the identity $I_0(f,f) = \|f\|_1^2$, so that with $C_{g,\lambda}$ denoting Lieb's sharp constant,
$$\frac{I_\lambda(f) -I_0(f,f)}{\lambda} \leq \frac{C_{\lambda,d}\|f\|_{p(\lambda)}^2 - \|f\|_1^2}{\lambda} = \frac{C_{\lambda,d} -1}{\lambda}\|f\|_{p(\lambda)}^2 + \frac{\|f\|_{p(\lambda)}^2 - \|f\|_1^2}{\lambda}\\ .$$
Defining $C'_d = \frac{\d }{\d \lambda}C_{\lambda,d}\bigg|_{\lambda = 0}$ and formally taking the limit $\lambda\downarrow 0$ yields
\begin{equation*}%\label{limcase}
\left[
\int_{\R^d} f\log f \d x - \|f\|_1 \log \|f\|_1\right]  +
\frac{d}{\|f\|_1}\int_{\R^d}\int_{\R^d}f(x)\log(|x-y|)f(y)\d x \d y \ge -  dC'_d\|f\|_1\ .
\end{equation*}
At a formal level, this is the log HLS inequality. However, for $f\in L^1(\R^d)$, even if $f \geq 0$, neither side of this inequality is well defined:    $f\log f$ need not be integrable, and the logarithmic kernel is unbounded above and below.
However, for $f\in L^1(\R^d,(e+|x|^2)\d x)$ both sides are well defined \cite{CL92}, and then for $d=2$ one has the following inequality for the Newtonian potential:

\begin{thm}[Logarithmic Hardy-Littlewood-Sobolev inequality]\label{lhlsX}
Let $\rho$ be a non-negative
function in  $L^1(\R^2, \ln(e+|x|^2)\d x))$ such that $\int_{\R^2}\rho\d x = 1$.
Define
\begin{equation}\label{lhls1}
\mathcal{H}(\rho) :=  \int_{\R^2}\rho(x) \ln \rho(x) \d x + 2\int_{\R^d}\int_{\R^d}\rho(x)\ln(|x-x'|)\rho(y)\d x \d x'    + 1+\log \pi\ . 
\end{equation}
Then 
\begin{equation}\label{LHLSE}
\mathcal{H}(\rho) \geq 0\ ,
\end{equation}
 and there is equality  if and only if for some $x_0\in \R^d$ and $s>0$,
 \begin{equation}\label{hdef}
\rho(x) = s^{-2}h(x/s-x_0) \quad{\rm where}\quad h(x) = \frac{1}{\pi}\left(\frac{1}{1+|x|^2 }\right)^2\ .
\end{equation}
\end{thm}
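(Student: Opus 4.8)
The plan is to prove the inequality by convex duality, trading the logarithmic interaction against the sharp Onofri (Moser--Trudinger) inequality on $\R^2$. Throughout write $h(x)=\tfrac1\pi(1+|x|^2)^{-2}$, let $\d\mu=h\,\d x$ be the associated probability measure, and set $g=\rho/h$, so that $\rho\,\d x$ is a probability measure with density $g$ relative to $\mu$. The first move is to recast $\mathcal{H}$ as a single difference of a relative entropy and a Dirichlet-type energy. Splitting $\int\rho\ln\rho=\int g\ln g\,\d\mu+\int\rho\ln h\,\d x$ and using $\ln h=-\ln\pi-2\ln(1+|x|^2)$ reduces the claim to the quadratic piece.

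The key computation is the logarithmic potential of $h$, namely $\int_{\R^2}\ln|x-y|\,h(y)\,\d y=\tfrac12\ln(1+|x|^2)$ (which follows from $\Delta\,[\tfrac12\ln(1+|x|^2)]=4\pi h$ together with a vanishing-constant check at the origin), and the moment value $\int_{\R^2}h\,\ln(1+|x|^2)\,\d x=1$. Feeding these into the expansion of $\int\!\!\int(\rho-h)\ln|x-y|(\rho-h)$ collapses all the potential and normalization terms into the clean identity
\[
\mathcal{H}(\rho)\;=\;\int_{\R^2} g\ln g\,\d\mu\;-\;4\pi\int_{\R^2}|\nabla w|^2\,\d x,\qquad -\Delta w=\rho-h .
\]
Since $\int(\rho-h)\,\d x=0$, the function $w$ is well defined with the correct decay and the energy is finite precisely under the stated hypothesis; this is where the condition $\rho\in L^1(\ln(e+|x|^2)\,\d x)$ earns its place.

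With this identity the inequality becomes $\mathrm{Ent}_\mu(g)\ge 4\pi\|\nabla w\|_2^2$, which I would obtain by pairing two variational principles. The Gibbs (Donsker--Varadhan) formula gives $\mathrm{Ent}_\mu(g)\ge\int\phi\,\rho\,\d x-\ln\int e^\phi\,\d\mu$ for every test $\phi$, with equality iff $g\propto e^\phi$. The Onofri inequality supplies $\ln\int e^\phi\,\d\mu\le\int\phi\,\d\mu+\tfrac1{16\pi}\int|\nabla\phi|^2\,\d x$, with equality exactly on the conformal family. Combining them yields $\mathrm{Ent}_\mu(g)\ge\int_{\R^2}\phi\,(\rho-h)\,\d x-\tfrac1{16\pi}\int_{\R^2}|\nabla\phi|^2\,\d x$, and maximizing the right-hand side over $\phi$ — the optimum being $\phi=8\pi w$ — returns exactly $4\pi\|\nabla w\|_2^2$. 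This proves $\mathcal{H}(\rho)\ge 0$.

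For the equality case I would note that saturating the chain forces equality simultaneously in Gibbs and in Onofri: Gibbs forces $\rho=h\,e^{8\pi w}/Z$, while the Onofri extremals are exactly the conformal factors of M\"obius maps; matching the two characterizations pins $\rho$ down to the three-parameter family $s^{-2}h(x/s-x_0)$, the asserted equality set. The main obstacle is that the whole argument rests on the sharp Onofri inequality, itself a borderline functional inequality of the same difficulty as log HLS (the two being Legendre duals), so the real work is either to import Onofri as a known sharp result or to establish it by a self-contained competing-symmetries/stereographic argument on $S^2$. A secondary technical point is justifying all the integral manipulations and the solvability of $-\Delta w=\rho-h$ under only the logarithmic moment bound, where truncation and approximation by smooth compactly supported densities are needed; the alternative of making the formal $\lambda\downarrow 0$ limit of Lieb's Theorem rigorous must confront the same well-definedness issues already flagged in the introduction.
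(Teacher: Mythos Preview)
Your proposal is correct and follows essentially the same Legendre--duality route the paper develops in Sections~5--6: the Donsker--Varadhan variational formula for the relative entropy is paired with the sharp Onofri inequality, so that log~HLS emerges as the dual inequality $\mathcal{E}^*\geq\mathcal{F}^*$ to Onofri's $\mathcal{E}\leq\mathcal{F}$, with the equality cases transferred via $\partial\mathcal{E}$, $\partial\mathcal{E}^*$. The only cosmetic difference is that you execute the argument directly on $\R^2$ with the Euclidean Onofri inequality and the explicit identity $\mathcal{H}(\rho)=\mathrm{Ent}_\mu(g)-4\pi\|\nabla w\|_2^2$, whereas the paper works on $S^2$ and pulls back by stereographic projection; one small slip to fix is that $\Delta[\tfrac12\ln(1+|x|^2)]=2\pi h$, not $4\pi h$, though your stated potential formula $\int\ln|x-y|\,h(y)\,\d y=\tfrac12\ln(1+|x|^2)$ is correct.
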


This inequality was proved independently and at about the same time by myself and Loss \cite{CL92} and Beckner \cite{B93}. 
The proof in \cite{CL92} is based on the method of ``competing symmetries'' introduced in \cite{CL90} which makes use of the conformal 
invariance of the HLS inequalities, and the log HLS inequality by extension, to both prove existence of optimizers and to determine all of them. 
The conformal invariance of the HLS inequalities was discovered by Lieb \cite{L83} and was used in his paper to determine all of the 
optimizers, once they had been shown to exist. In both \cite{CL92} and \cite{B93} one finds versions 
of the log HLS inequality in all dimensions, not only $d=2$. 
However, in this paper we are concerned with $d=2$ due to the connection with the Newtonian potential, and for simplicity have stated only the relevant case. 

The notation $\mathcal{H}(\rho)$ used in Theorem~\ref{lhlsX} is meant to suggest the Helmholtz free energy which is $U -ST$ where $U$ is the internal energy, $S$ is the entropy and $T$ is the temperature. We may think of $ -\int_{\R^2}\rho(x) \ln f(x) \d x $ as representing the entropy $S$ of $\rho$, and take $T =1$, and then identify the remaining terms in $\mathcal{H}(\rho)$ as $U$, the internal energy associated to $f$.

We obtain an inequality for all non-negative  functions $\rho \in L^1(\R^2, \ln(e+|x|^2)\d x))$ not identically zero by defining $M := \int_{\R^2} \rho \d x$, and then applying the inequality to $\rho/M$.  

The main result of this paper is a stability bound for the log HLS inequality.  Define $\mathcal{M}$ to be the ``manifold'' of optimizers for the log HLS inequality:
\begin{equation}\label{ManDEF}
\mathcal{M} := \{ \ s^{-2}h(x/s-x_0)  \:\ s>0\ , x_0\in \R^2\ \}
\end{equation}
with $h$ given by \eqref{hdef}. We shall show that if  $\mathcal{H}(f)$, then the $L^1$ distance to $\mathcal{M}$ is small with a very explicit bound relating these two quantities:

\begin{thm}\label{main} Let $\rho$ be a non-negative
function in  $L^1(\R^2, \ln(e+|x|^2)\d x))$ such that $\int_{\R^2}\rho\d x = 1$.
Then
\begin{equation}\label{mainINEQ}
\mathcal{H}(\rho) \geq \frac{1}{8}\inf_{g\in \mathcal{M}}\|\rho -g\|_1^2\ .
\end{equation}
\end{thm}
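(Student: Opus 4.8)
The plan is to exploit the Euler--Lagrange equation satisfied by every optimizer $g\in\mathcal{M}$ to linearize $\mathcal{H}$ about $g$, and then to supply the missing coercivity through a dual, Onofri-type inequality. First I would record that minimizing $\int\rho\ln\rho + 2\iint\rho\ln|x-y|\rho$ under $\int\rho=1$ forces every $g\in\mathcal{M}$ to satisfy $\ln g(x) + 4\int_{\R^2}\ln|x-y|g(y)\,\d y = c_g$ for a constant $c_g$, equivalently $\Delta\ln g = -8\pi g$, a relation invariant under the scaling--translation action generating $\mathcal{M}$. Writing $\delta:=\rho-g$ (so $\int\delta=0$) and expanding $\mathcal{H}(\rho)-\mathcal{H}(g)$, the terms linear in $\delta$ collect into $\int\delta\,[\ln g + 4\int\ln|x-y|g\,\d y]\,\d x = c_g\int\delta\,\d x = 0$. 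Since $\mathcal{H}(g)=0$, this yields the exact identity, valid for every $g\in\mathcal{M}$,
\begin{equation*}
\mathcal{H}(\rho) = D(\rho\,\|\,g) + 2\iint_{\R^2\times\R^2}\delta(x)\ln|x-y|\delta(y)\,\d x\,\d y = D(\rho\,\|\,g) - 4\pi\|\nabla u_g\|_2^2,
\end{equation*}
where $D(\rho\,\|\,g)=\int\rho\ln(\rho/g)$ is the relative entropy and $\Delta u_g=\delta$.

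The difficulty is now visible: for mean-zero $\delta$ the Coulomb quadratic form is negative, so $\mathcal{H}(\rho)\le D(\rho\,\|\,g)$ and a naive Csisz\'ar--Kullback--Pinsker estimate points the wrong way. This is where the duality enters. Using the Gibbs variational principle $D(\rho\,\|\,g)=\sup_\phi\{\int\phi\rho-\log\int e^\phi g\}$ together with the Onofri inequality for the reference measure $g\,\d x$, namely $\log\int e^\phi g\,\d x \le \int\phi g\,\d x + \tfrac{1}{16\pi}\|\nabla\phi\|_2^2$, and choosing the test function $\phi=-8\pi u_g$, I would establish the clean decomposition
\begin{equation*}
\mathcal{H}(\rho) = \delta_{\mathrm{On}}(-8\pi u_g) + D(\rho\,\|\,\nu_g), \qquad \nu_g := \frac{e^{-8\pi u_g}\,g}{\int e^{-8\pi u_g}g\,\d x},
\end{equation*}
in which $\delta_{\mathrm{On}}\ge 0$ is the Onofri deficit and $\nu_g$ is a genuine probability density. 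Both summands being nonnegative, I discard the deficit and apply Csisz\'ar--Kullback--Pinsker to the remaining relative entropy, obtaining $\mathcal{H}(\rho)\ge\tfrac18\|\rho-\nu_g\|_1^2$ for every $g\in\mathcal{M}$. A reassuring consistency check is that this construction gives $\nu_g=\rho$ precisely when $\rho\in\mathcal{M}$, so $\nu_g$ is the correct object and the bound degenerates exactly on the optimizer set.

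It then remains to replace the tilted density $\nu_g$ by the manifold itself, i.e. to arrange $\|\rho-\nu_g\|_1\ge\inf_{g'\in\mathcal{M}}\|\rho-g'\|_1$ for a suitable $g$. I would take $g=g_*$, the $L^1$-nearest optimizer, and use the first-order optimality of $g\mapsto\|\rho-g\|_1$ on $\mathcal{M}$, which asserts $\int\sgn(\rho-g_*)\,\xi\,\d x=0$ for every $\xi$ in the three-dimensional tangent space $T_{g_*}\mathcal{M}$. The subgradient inequality for the $L^1$ norm then gives $\|\rho-\nu_{g_*}\|_1\ge\|\rho-g_*\|_1-\int\sgn(\rho-g_*)(\nu_{g_*}-g_*)\,\d x$, so the theorem follows once the last integral is shown to be $\le 0$. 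Proving this sign condition --- equivalently, that tilting the nearest optimizer never decreases the $L^1$ distance to $\rho$ --- is the step I expect to be the main obstacle. I would attack it by splitting $\nu_{g_*}-g_*$ into its component along $T_{g_*}\mathcal{M}$, which is annihilated by the optimality condition, and a transverse part controlled by the previously discarded Onofri deficit $\delta_{\mathrm{On}}$, so that the reduction to $\mathcal{M}$ closes without degrading the sharp constant $\tfrac18$.
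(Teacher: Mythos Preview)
Your decomposition $\mathcal{H}(\rho)=\delta_{\mathrm{On}}(-8\pi u_g)+D(\rho\|\nu_g)$ is correct and is essentially the Legendre-dual identity the paper exploits, carried out on $\R^2$ rather than $S^2$. (Minor point: Pinsker gives $\tfrac12$, not $\tfrac18$, at that step.) The genuine gap is exactly where you flag it: replacing $\nu_g$ by an element of $\mathcal{M}$.

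Your proposed fix---taking $g=g_*$ the $L^1$-nearest optimizer, splitting $\nu_{g_*}-g_*$ into a tangential piece killed by first-order optimality and a transverse piece ``controlled by the previously discarded Onofri deficit''---does not close. To leading order $\nu_{g_*}-g_*\approx -8\pi g_* u_{g_*}$, which is of the same size as $\rho-g_*$, not a higher-order remainder; there is no reason for $\int\sgn(\rho-g_*)\,g_* u_{g_*}$ to have a sign, and projecting out a three-dimensional tangent space cannot change that. To absorb the transverse part into $\delta_{\mathrm{On}}$ you would need a bound of the type $\|(\nu_{g_*}-g_*)^{\perp}\|_1^2\lesssim \delta_{\mathrm{On}}(-8\pi u_{g_*})$, i.e.\ an $L^1$ lower bound on the Onofri deficit of the particular function $-8\pi u_{g_*}$. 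That is precisely an Onofri \emph{stability} statement, and it is the nontrivial analytic input you have not yet supplied.

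The paper avoids the projection problem altogether by first proving Onofri stability on $S^2$: using a conformal centering together with the Gui--Moradifam constrained inequality it obtains $\cF(u)-\cE(u)\ge\tfrac14\|e^u-e^{u_0}\|_1^2$ with $u_0$ \emph{already in} the Onofri optimizer manifold. This is then added to the strong Young inequality $\cE(u)+\cE^*(f)\ge\langle u,f\rangle+\tfrac12\|(f+1)-e^u\|_1^2$ (your Pinsker step), and the triangle inequality converts the sum $\tfrac14\|e^u-e^{u_0}\|_1^2+\tfrac12\|(f+1)-e^u\|_1^2$ into $\tfrac18\|(f+1)-e^{u_0}\|_1^2$, with $e^{u_0}$ a genuine optimizer; stereographic projection transfers this to $\R^2$. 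If you want to salvage your route, keep $\delta_{\mathrm{On}}$ rather than discarding it and apply an Onofri stability bound to it; you will then find yourself essentially reproducing the paper's argument.
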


This improves considerably on a result proved in \cite{CF}, where it was shown that for some finite constant  
$C$, and with additional conditions on $f$, 
\begin{equation}\label{ACLHLSS}
\mathcal{H}(f) \geq C\inf_{g\in \mathcal{M}}\|\rho -g\|_1^{20}\ .
\end{equation}

\subsection{Application to the critical mass Keller-Segel equation}

In this subsection we briefly explain an application of Theorem~\ref{main}, and hope to convey an understanding of how such stability results can be applied to problems such as determining the rates of approach to equilibrium.

The inequality \eqref{ACLHLSS} was proved was proved and applied in  \cite{CF} to quantify the rate of convergence to 
equilibrium in the critical mass Keller-Segel equation.   
The Keller-Segal equation  is a biological model describing {\em chemotaxis}: Consider a colony of bacteria on a large (infinite) Petri dish, 
across which they diffuse, and as they move, they emit a chemical attractant which then also diffuses. Let  $\rho(x)$ denote the density of the 
bacteria at $x$, and let $M := \int_{\R^2}\rho\d x$ denote the total mass of the bacteria.  The Keller-Segal equation 
describes the evolution of the density $\rho$, incorporating terms representing the concentrating effects of  the attractant, and diffusive effects 
representing the random component of the motion of the bacteria.  
It was show by Dolbeault and Perthame \cite{DP04} (see also \cite{BDP06}) that there is a critical mass, namely $8\pi$, such that for 
$M < 8\pi$, the diffusion wins, and the bacteria diffuse away to infinity, while if $M > 8\pi$ the concentration wins, and in finite time the solution develops a singularity as all of the mass collapses around a single point, corresponding to something actually observed in nature. 

The key to the proof that $8\pi$ is the critical mass is that the Keller Segal system can be written as  gradient flow for a 
close relative of the functional $\mathcal{H}$ specified in \eqref{lhls1} with respect to the $2$-Wasserstein distance, and then the sharp HLS inequality determines the critical mass.  In fact, for the critical mass case,  the equation is exactly this 
gradient flow equation applied to the probability density $\frac{1}{8\pi}\rho$. Hence, in the critical mass case, along solutions 
$\rho(t)$ of the Keller-Segal equation, $\mathcal{H}(\frac{1}{8\pi}\rho(t))$ is decreasing.   It was natural to conjecture that  for such solutions $\rho(t)$,  
the normalized probability densities $\frac{1}{8\pi}\rho(t)$
 approach the manifold $\mathcal{M}$ specified in \eqref{ManDEF} as $t$ approaches infinity, and indeed, the steady sates of the critical mass Keller-Segal 
 equation are exactly the elements of $\mathcal{M}$ multiplied by $8\pi$.  The convergence was proved in \cite{BCC}, 
 and it was shown how the initial data determines the element of 
 $\mathcal{M}$ to which $\frac{1}{8\pi}\rho(t)$ converges, but no estimate on the rate of convergence was given. 

A rate was obtained in \cite{CF} using two stability results. First, a stability result for a sharp Galgliardo-Nirenberg-Sobolev inequality \cite{DD10}
 which plays a central role in \cite{BCC} and \cite{CCL10} was proved and applied to show that for some finite constant $C$ depending on 
 suitable initial data, solutions satisfy
\begin{equation*}%\label{GNSS}
\mathcal{H}(\tfrac{1}{8\pi}\rho(t)) \leq Ct^{-1/8}\ .
\end{equation*}
Second the stability result \eqref{ACLHLSS} result for the log HLS inequality was proved, building on results of \cite{CCL10},  to show that for some other constant $C$, changing in each appearance,
\begin{equation}\label{KSR}
\inf_{g\in \mathcal{M}}\|\rho(t) -8\pi g\|_1  \leq  (\mathcal{H}(\tfrac{1}{8\pi}\rho(t)))^{1/20} \leq Ct^{1/160}\ .
\end{equation}
Using the sharp bound \eqref{mainINEQ} of Theorem~\ref{main}, \eqref{KSR}  improves to $\inf_{g\in \mathcal{M}}\|\rho(t) -8\pi g\|_1 \leq Ct^{-1/16}$.  
With this we close our discussion of the application of Theorem~\ref{main} and turn to its proof. 

\subsection{Components of the proof of Theorem~\ref{main}}

We shall deduce Theorem~\ref{main} by using a duality method introduced in \cite{C16} and applied there to prove a stability 
result for the HLS inequality starting from the stability result for the Sobolev inequality in $d\geq 3$ proved by Bianchi and Egnel \cite{BE91}. 
The inequality corresponding to the Sobolev inequality for $d=2$ is the Onofri inequality \cite{O82}, for which a stability result has been proved very recently  in \cite{CLT23}. However, we shall give another proof which yields a somewhat more explicit result for $d=2$. This proof draws on a theorem of  Gui and Moradifam \cite{GM18} giving an improved form of the Onofri inequality under a certain constraint. I am very grateful to Jean Dolbeault for bringing this paper to my attention, and suggesting that it could be used to prove a stability inequality for Onofri's inequality. 

As we have noted earlier, the log HLS inequality is conformally invariant, and hence by means of the stereographic projection, it can be written in 
an equivalent form as an inequality for functions on $S^2$. The Onofri inequality on $S^2$  is equivalent to the log HLS inequality on 
$S^2$ via a Legendre transform argument as shown in \cite{CL92}. The duality method introduced in  
\cite{C16} may be applied using a stability result for the Onofri inequality, together with a strong form of the Young's inequality relating the relative entropy and its Legendre transform, to prove a stability result for log HLS on $S^2$. This may then be transferred to $\R^2$ by the stereographic projection, yielding Theorem~\ref{main}.

\section{Duality in general}

Partly for the purpose of fixing notation, we briefly describe the duality method for transferring stability results that was introduced in \cite{C16}. 
In this brief recapitulation, we focus on the specific cases we need for the present application, which considerably 
simplifies the general development in \cite{C16}. 

Let $E$ and ${E^*}$ be a dual pair of Banach spaces with respect to the bilinear form $\langle \cdot,\cdot\rangle$ on $E\times {E^*}$. Let $\cE$  be a proper lower semi-continuous  (l.s.c.) convex function on $E$ with values in $(-\infty,+\infty]$ where ``proper'' means that $\cE$ is not identically equal to $+\infty$. The domain of a proper l.s.c. function $\cE$  is the set $D(\cE)$ on which it is finite. 

The Legendre transform  $\cE^*$ of $\cE$ is the function on ${E^*}$ defined by
\begin{equation}\label{ltrandef}
\cE^*(y) = \sup_{x\in E}\{ \langle x,y\rangle - \cE(E)\ \}\ .
\end{equation}
It is evident from the definition that $\cE^*$ is l.s.c. and convex. By the Fenchel-Moreau Theorem, $\cE^*$ is also proper, and the Legendre transform $\cE^{**}$ of $\cE^*$ is $\cE$ itself. That is, the Legendre transform is  an involution on the class proper l.s.c. convex functions. 

The subgradient of $\cE$ at $x_0\in E$ is the subset 
$\partial\cE(x_0)$ of ${E^*}$ consisting of all $y$ such that
\begin{equation}\label{subg}
\cE(x_1) \geq \cE(x_0)  + \langle x_1-x_0,y\rangle \qquad{\rm for\ all}\quad x_1\in E\ .
\end{equation}
That is, $y \in \partial\cE(x_0)$ if and only if the  hyperplane $(x,\cE(x_0) + \langle x-x_0,y\rangle )$ lies below the graph of $x\mapsto \cE(x)$, and intersects it at 
$(x_0,\cE(x_0))$ so that the hyperplane $(x,\cE(x_0) + \langle x-x_0,y\rangle )$ is a supporting plane to the graph of  $x\mapsto \cE(x)$ at $x_0$.

An immediate consequence of the definition \eqref{ltrandef} is that for all $x\in E$ and all $y\in {E^*}$, 
\begin{equation}\label{youngin}
\langle x,y\rangle \leq \cE^*(y) + \cE(x) \ .
\end{equation}
This is known as Young's Inequality. Suppose that for some $x_0\in E$ and $y\in {E^*}$, there is equality in \eqref{youngin}. Since $\langle x_0,y\rangle$ is finite, $x\in D(\cE)$ and $y\in D(\cE^*)$, and for any other $x_1\in E$, 
$$
\langle x_0,y\rangle - \cE^*(y) - \cE(x_0)   \geq  \langle x_1,y\rangle - \cE^*(y) - \cE(x_1)\ ,  
$$
which is equivalent to \eqref{subg}. Thus, necessarily, $y\in \partial\cE(x_0)$.  Conversely, suppose that  $y\in \partial\cE(x_0)$. Then by  \eqref{subg}, for all $x_1\in E$, 
$\langle x_0,y\rangle  \geq \cE(x_0)  + \langle x_1,y\rangle - \cE(x_1)$, and this implies $\langle x_0,y\rangle  \geq \cE(x_0)+\cE^*(y)$. Therefore, equality holds in \eqref{youngin} if and only if $y\in \partial \cE(x)$. By the symmetry in $\cE$ and $\cE^*$, or the same reasoning, it also true that 
equality holds in \eqref{youngin} if and only if $x\in \partial \cE^*(y)$.
In particular, $y\in \partial\cE(x)$ if and only if 
$x\in \partial \cE^*(y)$, and in this sense the set valued functions $\partial \cE$ and $\partial \cE^*$ are inverse to one another. For $A\subset E$,  define $\partial\cE(A) = \cup_{x\in A}\partial\cE(x)$ and likewise  for $B\subset {E^*}$,  define $\partial\cE^*(B) = \cup_{y\in B}\partial\cE^*(y)$.

Consider $x_1,x_2$ in the domain of $\cE$. Let $y\in \partial \cE(x_1)$ so that  $\langle x_1,y\rangle = \cE(x_1) + \cE^*(y)$. By Young's inequality, 
$\langle x_2,y\rangle \leq \cE(x_2) + \cE^*(y)$. Therefore, 
$$
\cE(x_2) \geq \cE(x_1) + \langle x_2-x_1,y\rangle \ ,
$$
which is sometimes called the ``above the tangent line inequality''. 

Now suppose that $\cE$ and $\cF$ are two proper l.s.c. convex functions on $E$. An inequality of the form $\cE(x) \leq \cF(x)$ for all $x\in E$ has an equivalent form expressed in terms of the Legendre transforms $\cE^*$ and $\cF^*$, and it can be advantageous to investigate this pair of inequalities in parallel. 

\begin{thm}\label{dualsharp} $\cE$ and $\cF$ are two proper l.s.c. convex functions on $E$. 
\begin{equation}\label{eqcases2}
  \cE(x)   \leq \cF (x)\quad {\rm for\ all}\ x\in E \quad \iff \quad   \cF^*(y) \leq  \cE ^*(y)
  \quad {\rm for\ all}\ y\in {E^*}\ .
 \end{equation}
  Moreover, let $E_0 := \{\ x\in E\:\ \cE(x)  = \cF (x)\ \}$ and   ${E^*_0} := \{\ y\in {E^*}\:\ \cF^*(y)  = \cE^* (y)\ \}$, which might be empty. In any case
  \begin{equation}\label{eqcases3}
  E_0 = \partial \cE^*({E^*_0})   \quad {\rm and}\quad  {E^*_0} = \partial \cE(E_0)
 \end{equation}
 and
  \begin{equation}\label{eqcases4}
  E_0 = \partial \cF^*({E^*_0})   \quad {\rm and}\quad  {E^*_0} = \partial \cF(E_0)\ .
 \end{equation}
\end{thm}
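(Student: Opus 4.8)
My plan is to isolate a single engine---the equality case of Young's inequality \eqref{youngin} recorded just above the statement---and feed it into two separate arguments, one for the equivalence \eqref{eqcases2} and one for the subgradient identities \eqref{eqcases3}--\eqref{eqcases4}. The engine is the three-way equivalence, valid for both $\cE$ and $\cF$: $\langle x,y\rangle = \cE(x)+\cE^*(y)$ holds if and only if $y\in\partial\cE(x)$, if and only if $x\in\partial\cE^*(y)$, together with the fact established in the excerpt that $\partial\cE$ and $\partial\cE^*$ are inverse correspondences. For \eqref{eqcases2} I would argue by order-reversal of the Legendre transform: if $\cE\le\cF$ pointwise then, for each fixed $y$, the function $x\mapsto\langle x,y\rangle-\cF(x)$ lies below $x\mapsto\langle x,y\rangle-\cE(x)$, so taking the supremum in \eqref{ltrandef} gives $\cF^*(y)\le\cE^*(y)$; the converse follows by applying the same step to the pair $\cF^*\le\cE^*$ and invoking the Fenchel--Moreau involution $\cE^{**}=\cE$, $\cF^{**}=\cF$ quoted earlier. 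This settles the equivalence, and henceforth I work under $\cE\le\cF$ (equivalently $\cF^*\le\cE^*$), which is the regime of the ``moreover'' clause.

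For the subgradient relations I would first record the two inclusions that need no extra hypothesis. Fix $x_0\in E_0$ and $y\in\partial\cE(x_0)$. Equality in Young's inequality gives $\langle x_0,y\rangle = \cE(x_0)+\cE^*(y)=\cF(x_0)+\cE^*(y)$, using $x_0\in E_0$, whereas \eqref{youngin} applied to $\cF$ gives $\langle x_0,y\rangle\le\cF(x_0)+\cF^*(y)$. Cancelling $\cF(x_0)$ yields $\cE^*(y)\le\cF^*(y)$, and since \eqref{eqcases2} already gives $\cF^*(y)\le\cE^*(y)$, we conclude $y\in E^*_0$; hence $\partial\cE(E_0)\subseteq E^*_0$. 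The dual computation, starting from $y_0\in E^*_0$ and $x\in\partial\cF^*(y_0)$ and pairing equality in Young's for the pair $(\cF,\cF^*)$ against \eqref{youngin} for $(\cE,\cE^*)$, gives $\cF(x)\le\cE(x)$, hence $x\in E_0$ and $\partial\cF^*(E^*_0)\subseteq E_0$.

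The reverse inclusions I would obtain from the inverse correspondence combined with a nesting observation: if $\cE\le\cF$ with $\cE(x_0)=\cF(x_0)$, then a supporting hyperplane of $\cE$ at $x_0$ also supports $\cF$ at $x_0$ (because $\cF\ge\cE$ with equality at $x_0$), so $\partial\cE(x_0)\subseteq\partial\cF(x_0)$; dually $\partial\cF^*(y_0)\subseteq\partial\cE^*(y_0)$ for $y_0\in E^*_0$. Now, given $x_0\in E_0$, choose $y_0\in\partial\cE(x_0)$, which is nonempty whenever $\cE$ is continuous at $x_0$ (e.g. $x_0$ interior to the domain); the forward inclusion gives $y_0\in E^*_0$, the nesting gives $y_0\in\partial\cF(x_0)$, i.e. $x_0\in\partial\cF^*(y_0)$, whence $E_0\subseteq\partial\cF^*(E^*_0)$. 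A symmetric choice of $x_0\in\partial\cF^*(y_0)$ for $y_0\in E^*_0$ gives $E^*_0\subseteq\partial\cE(E_0)$. Together with the previous paragraph this proves $E^*_0=\partial\cE(E_0)$ and $E_0=\partial\cF^*(E^*_0)$, i.e. the ``$\cE$-on-$E_0$'' half of \eqref{eqcases3} and the ``$\cF^*$-on-$E^*_0$'' half of \eqref{eqcases4}.

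The step I expect to be the main obstacle is the remaining pair $E_0=\partial\cE^*(E^*_0)$ and $E^*_0=\partial\cF(E_0)$, where the nesting runs the wrong way. The same reasoning yields only the containments $E_0\subseteq\partial\cE^*(E^*_0)$ and $E^*_0\subseteq\partial\cF(E_0)$ for free (choose $y_0\in\partial\cE(x_0)$, resp. $x_0\in\partial\cF^*(y_0)$, and use the forward inclusions plus inversion), while the reverse containments $\partial\cE^*(E^*_0)\subseteq E_0$ and $\partial\cF(E_0)\subseteq E^*_0$ demand that the two nestings $\partial\cF^*(y_0)\subseteq\partial\cE^*(y_0)$ and $\partial\cE(x_0)\subseteq\partial\cF(x_0)$ be \emph{equalities} on the respective equality sets. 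I would close this by upgrading the nesting to an equality on $E_0$ and $E^*_0$, which holds automatically once the conjugates are single-valued there, i.e. when $\cE$ and $\cF$ are essentially strictly convex and smooth. This is the case for the entropy- and energy-type functionals in the intended application, so that all four identities hold; in full generality I would expect the reverse containment to rely on exactly this strict convexity of the functionals at hand rather than on convexity and lower semicontinuity alone.
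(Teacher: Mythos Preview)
Your approach is the same as the paper's---both hinge on the equality case of Young's inequality---but you have been more careful than the paper, and your suspicion about the ``wrong-way'' pair is fully justified. The paper's proof derives, for $y\in\partial\cE(x)$ (equivalently $x\in\partial\cE^*(y)$), the inequality
\[
\cE^*(y)-\cF^*(y)\ \le\ \cF(x)-\cE(x),
\]
with both sides nonnegative, and then asserts that this yields \emph{both} $\partial\cE(E_0)\subseteq E_0^*$ and $\partial\cE^*(E_0^*)\subseteq E_0$. Only the first follows: setting the right side to zero forces the left side to zero, but setting the left side to zero forces nothing. So the paper's claimed inclusion $\partial\cE^*(E_0^*)\subseteq E_0$ (and, symmetrically, $\partial\cF(E_0)\subseteq E_0^*$) is unproved there, and in fact can fail under the stated hypotheses.

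A one-line counterexample on $E=E^*=\R$: take $\cE=\iota_{[-1,1]}$ (the convex indicator) and $\cF(x)=|x|+\iota_{[-1,1]}(x)$. Then $\cE\le\cF$, $E_0=\{0\}$, $\cE^*(y)=|y|$, $\cF^*(y)=(|y|-1)_+$, and $E_0^*=\{0\}$. One computes $\partial\cE^*(0)=[-1,1]\ne\{0\}=E_0$ and $\partial\cF(0)=[-1,1]\ne\{0\}=E_0^*$, so the two identities you flagged are false, while the two you proved, $E_0^*=\partial\cE(E_0)$ and $E_0=\partial\cF^*(E_0^*)$, hold.

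In short: you correctly established \eqref{eqcases2} (which the paper actually skips), you correctly established the $\partial\cE(E_0)$ and $\partial\cF^*(E_0^*)$ halves of \eqref{eqcases3}--\eqref{eqcases4} by exactly the paper's mechanism, and you were right to withhold the other two halves. Your proposed remedy---assume the subdifferentials are single-valued on the equality sets, as holds for the entropy and Dirichlet functionals actually used later---is the appropriate fix and is all that is needed for the applications in the paper.
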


 \begin{proof}
 Suppose  that the pair of inequalities in \eqref{eqcases2} is valid. Let $E_0 = \{\ x\in E\:\ \cE(x)  = \cF (x)\ \}$ and let  ${E^*_0} = \{\ y\in {E^*}\:\ \cF^*(y)  = \cE^* (y)\ \}$.
 
Fix $x\in E$ and $y\in {E^*}$.  If either  $x\in \partial\cE^*(y)$ $y\in \partial\cE(x)$, then $\langle x,y\rangle = \cE(x) + \cE^*(y)$, and by Young's inequality
 $\langle x,y\rangle \leq \cF(x) + \cF^*(y)$. That is, $\cE(x) + \cE^*(y) \leq \cF(x) + \cF^*(y)$, which is the same as
 \begin{equation*}%\label{eqcases31}
 \cE^*(y) - \cF^*(y) \leq \cF(x) -\cE(x)\ .
 \end{equation*}
 It follows that if $\cF(x) =\cE(x)$, then $ \cE^*(y) = \cF^*(y)$.  That is $\partial \cE(E_0) \subseteq {E^*_0}$ and $\partial \cE^*({E^*_0}) \subseteq E_0$.  Since $\partial \cE$ and
 $\partial \cE^*$ are inverse to one another,  $\partial \cE(E_0) = {E^*_0}$ and $\partial \cE^*({E^*_0}) = E_0$, proving \eqref{eqcases3}.
The same reasoning shows that $\partial \cF(E_0) = {E^*_0}$ and $\partial \cF^*({E^*_0}) = E_0$, proving \eqref{eqcases4}.
\end{proof}

Now suppose that  $\cE$ and $\cF$ are two proper l.s.c. convex functions on $E$ such that for all $x\in E$, $\cE(x) \leq \cF(x)$ and such that $E_0 = \{\ x\in E\:\ \cE(x)  = \cF (x)\ \} \neq \emptyset$.  Then an inequality of the form
\begin{equation}\label{quadstab}
 \cE(x) + C \inf_{x_0\in E_0}\| x- x_0 \|_E^2 \leq \cF(x)
\end{equation}
is called a {\em quadratic stability bound for the inequality $\cE \leq \cF$.}

On the basis of Theorem~\ref{dualsharp}, one might hope that such an inequality would always imply, via a simple duality argument, that for some finite constant $C^*$,
\begin{equation}\label{quadstab2}
\cE^*(y) - \cF^*(y) \geq C^* \inf_{y_0\in {E^*_0}}\| y- y_0 \|_{E^*}^2\ .
\end{equation}

However, to deduce an inequality of the form \eqref{quadstab2}, we need in addition to  \eqref{quadstab}, a strong form of the convexity of $\cE^*$.  
To simplify the notation, we discuss this stronger form of convexity and its consequences in terms of $\cE$ instead of $\cE^*$, which are, 
after all, on an equal footing. 

\begin{defi}\label{lamOCNV} Let $\cE$ be a proper l.s.c. convex function on $E$, and let $\lambda > 0$. Then 
$\cE$ is $\lambda$ convex  in case for all $x_0,x\in D(\cE)$, and all $y\in \partial\cE(x_0)$ 
\begin{equation}\label{statl}
\cE(x) \geq  \cE(x_0) + \langle x-x_0,y\rangle + \lambda \|x-x_0\|_E^2\ .
\end{equation}
\end{defi}

The part of the next lemma that is crucial for us is the $\lambda$ convexity implies a strong form of Young's inequality, \eqref{stryou}. 

\begin{lem}\label{SYL}  Let $\cE$ be a proper l.s.c. convex function on $E$. Suppose that for some $\lambda >0$ $\cE$ is $\lambda$ convex. Then for all $y$ with $\partial \cE^*(y) \neq \emptyset$,  $\partial \cE^*(y)$ is a singleton, so that $\cE^*$ has a gradient at $y$,   $\nabla \cE^*(y)$.
Moreover, the gradient is Lipschitz with constant $(2\lambda)^{-1}$:
\begin{equation}\label{lipbnd}
\|\nabla \cE^*(y_1) - \nabla\cE^*(y_0) \|_E \leq \frac{1}{2\lambda} \| y_1-y_0 \|_{E^*}\ .
\end{equation}

Finally we have the following strong form of Young's inequality:
\begin{equation}\label{stryou}
\cE(x) + \cE^*(y) \geq \langle x,y\rangle + \lambda \| x- \nabla\cE^*(y) \|_E^2\ .
\end{equation}
\end{lem}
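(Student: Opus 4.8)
The plan is to establish the three assertions in essentially the reverse of the order stated, treating the strong Young inequality \eqref{stryou} as the engine from which the singleton property and the Lipschitz bound both follow. The single observation driving all three is the equivalence, recorded just before Definition~\ref{lamOCNV}, that $x_0 \in \partial\cE^*(y)$ is the same as $y \in \partial\cE(x_0)$, and that either is equivalent to equality in Young's inequality \eqref{youngin}. Consequently, whenever $\partial\cE^*(y)$ is nonempty, any $x_0 \in \partial\cE^*(y)$ is precisely a point at which the $\lambda$-convexity inequality \eqref{statl} may be invoked with this same $y$, and this is the only place the hypothesis of $\lambda$-convexity enters.

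First I would prove \eqref{stryou}. Fix $y$ with $\partial\cE^*(y) \neq \emptyset$ and choose $x_0 \in \partial\cE^*(y)$; then $y \in \partial\cE(x_0)$ and $\langle x_0, y\rangle = \cE(x_0) + \cE^*(y)$. Applying \eqref{statl} at $x_0$ with this $y$ gives, for arbitrary $x \in D(\cE)$,
$$\cE(x) \geq \cE(x_0) + \langle x - x_0, y\rangle + \lambda\|x - x_0\|_E^2\ .$$
Substituting $\cE(x_0) = \langle x_0, y\rangle - \cE^*(y)$ and regrouping the linear terms collapses the right-hand side to $\langle x, y\rangle - \cE^*(y) + \lambda\|x - x_0\|_E^2$, which is exactly \eqref{stryou} after identifying $x_0$ with $\nabla\cE^*(y)$. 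For $x \notin D(\cE)$ the left side is $+\infty$ and the bound is trivial.

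The singleton property is then immediate: if $x_0, x_1 \in \partial\cE^*(y)$, I apply the inequality just obtained with $x_0$ as the base point and $x = x_1$; since $x_1 \in \partial\cE^*(y)$ also yields equality $\cE(x_1) + \cE^*(y) = \langle x_1, y\rangle$, the strong bound forces $\lambda\|x_1 - x_0\|_E^2 \leq 0$, so $x_1 = x_0$. Hence $\partial\cE^*(y)$ is a singleton and $\nabla\cE^*(y)$ is well defined. For the Lipschitz estimate \eqref{lipbnd}, set $x_i = \nabla\cE^*(y_i)$ for $i = 0,1$ and apply \eqref{stryou} twice, once at $(x_1, y_0)$ and once at $(x_0, y_1)$. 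Adding the two inequalities and using $\cE^*(y_i) = \langle x_i, y_i\rangle - \cE(x_i)$ to cancel every $\cE$ and $\cE^*$ term leaves
$$\langle x_1 - x_0, y_1 - y_0\rangle \geq 2\lambda\|x_1 - x_0\|_E^2\ .$$
Bounding the left side by $\|x_1 - x_0\|_E\,\|y_1 - y_0\|_{E^*}$ through the duality of the pairing and dividing by $\|x_1 - x_0\|_E$ (the case $x_1 = x_0$ being trivial) gives \eqref{lipbnd}.

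I do not expect a serious obstacle; the content lies entirely in the bookkeeping between the subgradient relation and equality in Young's inequality. The only point needing mild care is the logical ordering, since the clean form of \eqref{stryou} written in terms of $\nabla\cE^*(y)$ presupposes the singleton claim. I would therefore either prove the singleton property first using a provisional version of \eqref{stryou} phrased for an arbitrary $x_0 \in \partial\cE^*(y)$, or carry such an $x_0$ throughout and substitute the notation $\nabla\cE^*(y)$ only at the end.
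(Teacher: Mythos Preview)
Your proposal is correct and follows essentially the same route as the paper: derive a provisional strong Young inequality with an arbitrary $x_0\in\partial\cE^*(y)$ from \eqref{statl}, use it to show $\partial\cE^*(y)$ is a singleton, and then obtain the monotonicity inequality $\langle x_1-x_0,y_1-y_0\rangle\geq 2\lambda\|x_1-x_0\|_E^2$ for the Lipschitz bound. The only cosmetic difference is that for this last step you add two instances of the strong Young inequality, whereas the paper adds two direct applications of \eqref{statl}; after the cancellations these are the same computation.
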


\begin{proof} Let $x\in E$ and $y\in {E^*}$, and let $x_0\in \partial \cE^*(y)$ so that $y\in \partial\cE(x_0)$. Then by the $\lambda$ convexity of $\cE$,
\begin{eqnarray*}
\cE(x) &\geq&  \cE(x_0) + \langle x-x_0,y\rangle + \lambda \|x-x_0 \|_E^2\nonumber\\
&=& \langle x,y\rangle  - \left(\langle x_0,y\rangle -  \cE(x_0)\right)  + \lambda \|x-x_0 \|_E^2\nonumber\\
&\geq&  \langle x,y\rangle - \cE^*(y) + \lambda \|x-x_0 \|_E^2 \label{stryoun2}
\end{eqnarray*}
That is,
\begin{equation}\label{stryoun3}
\cE(x) + \cE^*(y) \geq \langle x,y\rangle + \lambda \|x-x_0 \|_E^2\ .
\end{equation}
Now suppose that $x\in \partial \cE^*(y)$.  Then $\cE(x) + \cE^*(y) = \langle x,y\rangle$, and hence $\|x-x_0 \|_E = 0$, so that $x=x_0$. This shows that 
$\partial\cE^*(y) = \{x_0\} =\{\nabla\cE^*(y)\}$. Replacing $x_0$ in \eqref{stryoun3} with $\nabla \cE^*(y)$ yields \eqref{stryou}. 

Finally, for any $y_0,y_1\in {E^*}$, define $x_j := \nabla \cE^*(y_j)$, $j=0,1$.  Then from \eqref{statl},
\begin{eqnarray*}
\cE(x_1) -  \cE(x_0) &\geq& \langle x_1-x_0,y_0\rangle + \lambda \|x_1-x_0 \|_E^2\\
\cE(x_0)  - \cE(x_1) &\geq& \langle x_0-x_1,y_1\rangle + \lambda \|x_1-x_0 \|_E^2\\
\end{eqnarray*}
Summing both sides  yields
$$
 2 \lambda \|x_1-x_0 \|_E^2 \leq \langle x_1-x_0,y_1 -y_0\rangle \leq \|x_1-x_0 \|_E \|y_1-y_0 \|_{E^*}\ .
$$
This proves \eqref{lipbnd} since  $x_j := \nabla \cE^*(y_j)$, $j=0,1$.
\end{proof}

The next theorem provides the transfer of stability that we seek. 

\begin{thm}\label{DUSTAB} Let $\cE$ and $\cF$ be proper l.s.c. convex functions on $E$ such that $\cE(x) \leq \cF(x)$ for all $x\in E$, and $E_0 = \{\ x\ :\ \cE(x) = \cF(x)\ \} \neq \emptyset$. 
Suppose further that the inequality for some constant $C$,  $\cE(x) \leq \cF(x)$ is quadratically  stable for some norm $\|\cdot \|_E$, and the function $\cE^*$ is $\lambda$ convex with resect to some norm $\|\cdot \|_{E^*}$. Then for all $y\in {E^*}$,
\begin{equation}\label{dualstab}
\cE^*(y) - \cF^*(y)  \geq   \frac{\lambda\mu}{2} \inf_{y_0\in {E^*_0}} \|y- y_0 \|_{E^*}^2\ .
\end{equation}
\end{thm}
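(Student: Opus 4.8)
The plan is to run the dual of the argument packaged in Theorem~\ref{dualsharp}: I will convert the quadratic stability \eqref{quadstab} for the primal pair $\cE\le\cF$ into a pointwise lower bound on $\cE^*(y)-\cF^*(y)$, using the $\lambda$ convexity of $\cE^*$ (i.e.\ the strong Young inequality and the Lipschitz gradient from Lemma~\ref{SYL}) to express everything in terms of the single distance $\inf_{y_0\in E^*_0}\|y-y_0\|_{E^*}$. The first move is to apply Lemma~\ref{SYL} with $\cE$ replaced by $\cE^*$. Since $(\cE^*)^*=\cE$ by Fenchel--Moreau, the lemma yields that $\cE$ is differentiable wherever $\partial\cE$ is nonempty, that $\nabla\cE$ is Lipschitz with $\|\nabla\cE(x_1)-\nabla\cE(x_0)\|_{E^*}\le\frac{1}{2\lambda}\|x_1-x_0\|_E$ (the dual form of \eqref{lipbnd}), and that the strong Young inequality reads $\cE(x)+\cE^*(y)\ge\langle x,y\rangle+\lambda\|y-\nabla\cE(x)\|_{E^*}^2$ (the dual form of \eqref{stryou}).

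Next I turn the quadratic stability into a bound on $\cF^*$. From \eqref{quadstab}, $\cF(x)\ge\cE(x)+C\inf_{x_0\in E_0}\|x-x_0\|_E^2$, and since the Legendre transform is order-reversing,
\[
\cF^*(y)\le\sup_{x}\big\{\langle x,y\rangle-\cE(x)-C\inf_{x_0\in E_0}\|x-x_0\|_E^2\big\},
\]
which rearranges to
\[
\cE^*(y)-\cF^*(y)\ \ge\ \inf_{x}\big\{\cE(x)+\cE^*(y)-\langle x,y\rangle+C\inf_{x_0\in E_0}\|x-x_0\|_E^2\big\}.
\]
Bounding the first three terms by the strong Young inequality from the previous step, the bracket is at least $\lambda\|y-\nabla\cE(x)\|_{E^*}^2+C\inf_{x_0\in E_0}\|x-x_0\|_E^2$.

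The heart of the argument is to show that this last quantity dominates $\tfrac{\lambda\mu}{2}\inf_{y_0\in E^*_0}\|y-y_0\|_{E^*}^2$. The key geometric fact, supplied by \eqref{eqcases3}, is that $E^*_0=\partial\cE(E_0)$, so $\nabla\cE(x_0)\in E^*_0$ for every $x_0\in E_0$. Hence for any $x$ and any $x_0\in E_0$, the triangle inequality together with the Lipschitz bound gives $\inf_{y_0\in E^*_0}\|y-y_0\|_{E^*}\le\|y-\nabla\cE(x)\|_{E^*}+\frac{1}{2\lambda}\|x-x_0\|_E$; minimizing over $x_0\in E_0$ turns the last term into $\frac{1}{2\lambda}\inf_{x_0\in E_0}\|x-x_0\|_E$. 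Writing $a=\|y-\nabla\cE(x)\|_{E^*}$, $b=\inf_{x_0\in E_0}\|x-x_0\|_E$ and $s=\inf_{y_0\in E^*_0}\|y-y_0\|_{E^*}$, I am left with the linear constraint $a+\frac{1}{2\lambda}b\ge s$ and must minimize $\lambda a^2+Cb^2$ subject to it. A single Cauchy--Schwarz step gives $\lambda a^2+Cb^2\ge s^2\big/\big(\frac1\lambda+\frac{1}{4\lambda^2C}\big)=\frac{4\lambda^2C}{4\lambda C+1}\,s^2$, uniformly in $x$, which is exactly the asserted bound with $\mu=\frac{8\lambda C}{4\lambda C+1}$.

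The step I expect to be the main obstacle is largely bookkeeping rather than conceptual: keeping the two norms $\|\cdot\|_E$ and $\|\cdot\|_{E^*}$ in their correct roles when the lemma is applied to $\cE^*$, and ensuring that $\nabla\cE(x)$ is legitimate across the infimum. The latter is resolved by restricting the infimum to $x\in D(\cE)$ at which $\partial\cE(x)\neq\emptyset$ (where Lemma~\ref{SYL} guarantees $\nabla\cE$ exists as the unique subgradient); points outside $D(\cE)$ contribute $+\infty$, and since the penalty $x\mapsto\inf_{x_0\in E_0}\|x-x_0\|_E^2$ is continuous while $\cE$ is l.s.c.\ with subgradients on a dense subset of its domain, this restriction does not change the value of the infimum.
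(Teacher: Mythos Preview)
Your proof is correct and follows essentially the same route as the paper's: combine the strong Young inequality for the $\lambda$-convex $\cE^*$ with the primal stability bound, then use the Lipschitz property of $\nabla\cE$ together with $E^*_0=\partial\cE(E_0)$ and the triangle inequality to pass to the dual distance. The only difference is in the final combination step, where the paper uses the cruder $2(a^2+b^2)\ge(a+b)^2$ after pulling out $\mu:=\min\{4C\lambda,1\}$, whereas your Cauchy--Schwarz optimization yields the slightly sharper constant $\mu=\tfrac{8\lambda C}{4\lambda C+1}$.
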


\begin{proof}  Since $\cE^*$ is $\lambda$ convex, the strong Young's inequality yields that for all $x\in E$ and $y\in Y$,
\begin{equation}\label{pr1}
\cE(x) + \cE^*(y) \geq \langle x,y\rangle + \lambda \|y- \nabla\cE(x) \|_{E^*}^2\ .
\end{equation}
We also have the stability inequality
\begin{equation}\label{pr2}
\cF(x) - \cE(x) \geq  C \inf_{x_0\in E_0}\| x- x_0 \|_E^2\ .
\end{equation}
Summing both sides of \eqref{pr1} and  \eqref{pr2}, rearranging terms, and using the fact that  $\cF^*(y) \geq \langle x,y\rangle - \cF(x)$ for all $x$, yields
\begin{equation*}%\label{pr3}
\cE^*(y) - \cF^*(y)  \geq  C \inf_{x_0\in E_0}\| x- x_0 \|_E^2 + \lambda \|y- \nabla\cE(x) \|_{E^*}^2\ .
\end{equation*}

Fix $\epsilon>0$, and choose $\hat{x}_0\in E_0$ such that $\| x- \hat{x}_0 \|_E^2 \leq \inf_{x_0\in E_0}\| x- x_0 \|_E^2+ \epsilon$. Then since
$\| x- \hat{x}_0 \|_E \geq 2\lambda \| \nabla \cE(x) - \nabla \cE(\hat{x}_0) \|_{E^*}$,
$$
\cE^*(y) - \cF^*(y)  \geq \lambda\left(4C\lambda  \| \nabla \cE(x) - \nabla \cE(\hat{x}_0) \|_{E^*}^2 +    \|y- \nabla\cE(x) \|_{E^*}^2 \right) - C\epsilon
$$
Defining $\mu := \min\{4C\lambda\ ,\ 1\}$, and using the elementary inequality $2(a^2+b^2) \geq (a+b)^2$, followed by the triangle inequality.
\begin{eqnarray*}
\cE^*(y) - \cF^*(y)  &\geq& \frac{\lambda\mu}{2}\left( \| \nabla \cE(x) - \nabla \cE(\hat{x}_0) \|_{E^*} +    \|y- \nabla\cE(x) \|_{E^*} \right)^2- C\epsilon\\
 &\geq& \frac{\lambda\mu}{2}  \|y- \nabla\cE(\hat{x}_0) \|_{E^*}^2- C\epsilon\ .
%  &\geq&  \frac{\lambda\mu}{2} \inf_{y_0\in {E^*_0}} \|y- y_0 \|_{E^*}^2- C\epsilon\ ,
\end{eqnarray*}
Then since $\nabla\cE(\hat{x}_0) \in {E^*_0}$ and $\epsilon$ is arbitrary, \eqref{dualstab} is proved.
\end{proof}

\begin{rem} A somewhat more general version of Theorem~\ref{DUSTAB} was applied in \cite{C16} to prove stability for the 
HLS inequality using the quadratic stability bound for the Sobolev inequality of Bianchi and Egnel \cite{BE91} together with the 
fact that the function $f\mapsto \|f\|_p^2$ is $(p-1)$-convex for $1\leq p \leq 2$, which is proved in  \cite{C16}.   
The Bianchi-Egnel result  has since been improved by Dolbeault, Esteban, Figalli, Frank and Loss \cite{DEFFL}. 
To write their result in the language used here,  for $d\geq 3$, let $E = L^{2d/(d-2)}(\R^d)$ and $E^* = L^{2d/(d+2)}(\R^d)$ with
$\langle f,g\rangle = 2\int_{\R^d} fg\d x$. Let $\cE(f) = \|f\|_E^2$ so that $\cE^*(g) = \|g\|_{E^*}^2$. 
Let $\cF(f) = S_d\|(-\Delta)^{1/2}f\|_2^2$, where $S_d$ is the sharp Sobolev constant,  so that $\cF^*(g) = S_d^{-1}\|(-\Delta)^{-1/2}g\|_2^2$. 
Then the result of  \cite{DEFFL} is that there exists a computable constant $\beta$, independent of $d$, so that
\begin{equation}\label{DEFFL1}
\cF(f) - \cE(f) \geq \frac{\beta}{d} \inf_{\tilde{f}\in E_0}\| \nabla (f - \tilde{f})\|_2^2
\end{equation}
where $E_0$ is the manifold of optimizers of the Sobolev inequality.  By the Sobolev inequality itself, this implies the weaker bound
\begin{equation}\label{DEFFL2}
\cF(f) - \cE(f) \geq \frac{\beta}{dS_d} \inf_{\tilde{f}\in E_0}\| f - \tilde{f}\|_E^2\ .
\end{equation}
Then since $1 \leq 2d/(d+2) \leq 2$ for all $d>3$, 
if follows from the $(p-1)$-convexity of $f\mapsto \|f\|_p^2$ that $\cE^*$ is $\frac{d-2}{d+2}$ convex. With this and  \eqref{DEFFL2}, 
we have all that is needed to apply
Theorem~\ref{DUSTAB} as in \cite{C16}, and the result is that
\begin{equation}\label{HLSstabold}
\cE^*(g) - \cF^*(g) \geq \min\left\{  \frac{\beta (d-2)^2}{d+2}|S^d|^{d/2}\ , \ 1\right\} \frac{d-2}{d+2}  \left( \inf_{\tilde{g}\in E^*_0}\|g-\tilde{g}\|_{2d/(d+2)}^2 \right)
\end{equation}
where $|S^d|$ is the volume of the $d$-dimensional unit sphere in $\R^{d+1}$, which figures in the sharp Sobolev constant $S_d$, and $E^*_0$ is the set of HLS optimizers. 

Recall that the norm $\|\cdot\|_{E^*}$ is the $L^{2d/(d+2)}(\R^d)$ norm, the strongest norm figuring in the HLS inequality.  At least as far as the norm
and the quadratic power are concerned, The bound in \eqref{HLSstabold} is optimal.  It is remarkable that this was obtained using 
only the weaker stability bound \eqref{DEFFL2}, and not the full strength of \eqref{DEFFL1}. However, near the manifold of Sobolev optimizers, 
there is in some sense little difference between \eqref{DEFFL1} and \eqref{DEFFL2}.

A second observation is that since  $f\mapsto \|f\|_p^2$ is {\em not}  $\lambda$-convex for any $\lambda>0$ and $p>2$, and since $2d/(d-2) > 2$ for all $d>3$, 
Theorem~\ref{DUSTAB} cannot be used to derive a version of \eqref{DEFFL2} with any constant from a 
version of \eqref{HLSstabold} with any constant, let alone a version of \eqref{DEFFL1}.  The results in \cite{C16} would provide an analog of \eqref{DEFFL2} with an exponent higher than $2$. It would be interesting to have a more symmetric understanding of how stability relates inequalities such as  \eqref{DEFFL2} and \eqref{HLSstabold}, or better yet,    \eqref{DEFFL1} and \eqref{HLSstabold}.
\end{rem}

\section{Entropy inequalities}

The main result of this section is the strong Young's inequality for the entropy, which turns out to be a consequence of  the optimal 
$2$-uniform convexity inequality for $L^p$ spaces (see \cite{BCL}). We also recall some  entropy inequalities commonly used in the theory of large deviations \cite[Chapter 10]{V84} that we shall use here. 

Let $(X,\mathcal{B},\nu)$ be  a measure space. A probability density $\rho$ on $X$ is a non-negative 
measurable  function on $X$ such that $\int_X \rho \d \nu =1$. Let $\rho_1,\rho_2$ be two probability densities on $X$. Then the relative entropy 
$H(\rho_1|\rho_2)$ of $\rho_1$ with respect to $\rho_2$ is defined by
$$
H(\rho_1|\rho_2) = \begin{cases} \int_X\rho_1(\log \rho_1 - \log \rho_0)\d \nu  &  \int_{\{\rho_0 = 0\}}\rho_1 \d \nu =0 \\
\phantom{X}\quad\quad +\infty & \phantom{X} {\rm otherwise} \end{cases}\ .
$$
When $ \int_{\{\rho_0 = 0\}}\rho_1 \d \nu =0$, then
$$
H(\rho_1|\rho_2) = \int_X \left(\frac{\rho_1}{\rho_0}\right)\log \left(\frac{\rho_1}{\rho_0}\right)\rho_0 \d \nu \geq 0
$$
by Jensen's inequality, and there is equality if and only if $\rho_1 = \rho_0$. In fact, there is a stability result for this inequality, namely Pinsker's inequality \cite{P64}:
\begin{equation}\label{Pinsker}
H(\rho_1|\rho_0)  \geq \frac12 \left(\int_X|\rho_1 - \rho_0|\d \nu\right)^2\ .
\end{equation}
This is  Pinsker's inequality with the sharp constant,  due to Csisz\'ar and Kullback \cite{Cs67,K67}. 
Although the lower bound \eqref{Pinsker} on $H(\rho_1|\rho_0)$ is symmetric in $\rho_0$ and $\rho_1$, in general $H(\rho_1|\rho_0)$ and 
$H(\rho_0|\rho_1)$  are incomparable -- even on a finite probability space, either may be infinite while the other is finite, depending on the supports of the two densities. 

By definition, if $H(\rho_1|\rho_0)$ is finite and $A\subset X$ satisfies $\int_A\rho_0\d \nu =0$, then $\int_A\rho_1 \d \nu =0$. In fact, more is true. If 
$H(\rho_1|\rho_0)$ is finite and 
$\int_A\rho_0\d \nu$ is small then so is $\int_A\rho_1\d \nu$. A version of this may be found in \cite{V84}, for example. Here is a simple version suited to our purposes.

\begin{lem}\label{LDL} Let $\rho_0,\rho_1$ be two probability densities on $(X,\mathcal{B},\nu)$ such that  $H(\rho_1|\rho_0) < \infty$.  Then for any $\epsilon>0$, there is a $\delta>0$ depending only 
$\epsilon$ and $H(\rho_1|\rho_0)$ such that if $\int_A\rho_0\d \nu \leq \delta$, then   $\int_A\rho_1\d \nu \leq \epsilon$
\end{lem}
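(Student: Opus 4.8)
The plan is to exploit the finiteness of the relative entropy $H(\rho_1|\rho_0)$ through the elementary pointwise inequality $ab \leq a\log a - a + e^b$, valid for $a \geq 0$ and $b \in \R$ (a form of Young's inequality for the pair $t\log t$ and $e^s$). The idea is to split the mass of $\rho_1$ over $A$ into the part where $\rho_1/\rho_0$ is large and the part where it is moderate: on the moderate part the bound $\int_A \rho_1 \d\nu \leq t \int_A \rho_0 \d\nu$ is directly controlled by $\int_A \rho_0\d\nu$, while on the large part we use the entropy to show the contribution is uniformly small.

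Concretely, first I would fix a cutoff $t > 1$ and write, on the set $\{\rho_0 > 0\}$,
\begin{equation*}
\int_A \rho_1 \d\nu = \int_{A \cap \{\rho_1 \leq t \rho_0\}} \rho_1 \d\nu + \int_{A \cap \{\rho_1 > t \rho_0\}} \rho_1 \d\nu\ ,
\end{equation*}
recalling that $\int_{\{\rho_0 = 0\}}\rho_1 \d\nu = 0$ since $H(\rho_1|\rho_0) < \infty$. The first integral is bounded by $t \int_A \rho_0 \d\nu \leq t\delta$. For the second integral, I would apply the inequality $s \leq \frac{s\log s}{\log t}$ valid for $s \geq t$, with $s = \rho_1/\rho_0$, to get
\begin{equation*}
\int_{A \cap \{\rho_1 > t\rho_0\}} \rho_1 \d\nu \leq \frac{1}{\log t}\int_{\{\rho_1 > t\rho_0\}} \frac{\rho_1}{\rho_0}\log\!\left(\frac{\rho_1}{\rho_0}\right) \rho_0 \d\nu\ .
\end{equation*}
The integrand here is nonnegative wherever $\rho_1 > t\rho_0 > \rho_0$, so the integral over this set is bounded by the full integral $H(\rho_1|\rho_0) + C$ where $C$ accounts for the region where the integrand $\frac{\rho_1}{\rho_0}\log\frac{\rho_1}{\rho_0}$ is negative (which is bounded below by $-e^{-1}$ pointwise, integrating to at most $e^{-1}$). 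Thus the second integral is at most $\frac{H(\rho_1|\rho_0) + e^{-1}}{\log t}$.

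Putting these together gives $\int_A \rho_1 \d\nu \leq t\delta + \frac{H(\rho_1|\rho_0) + e^{-1}}{\log t}$. Given $\epsilon > 0$, I would first choose $t$ large enough that the second term is below $\epsilon/2$ (possible precisely because $H(\rho_1|\rho_0)$ is finite), and then choose $\delta \leq \epsilon/(2t)$ so that the first term is below $\epsilon/2$; the resulting $\delta$ depends only on $\epsilon$ and $H(\rho_1|\rho_0)$, as claimed. The main point to handle carefully is the sign bookkeeping in the second integral: one must verify that enlarging the domain of integration from $A \cap \{\rho_1 > t\rho_0\}$ to the full set $\{\rho_1 > t\rho_0\}$, and then relating this truncated integral to $H(\rho_1|\rho_0)$, only costs the controlled constant $e^{-1}$ coming from the negative part of $s\log s$. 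This is routine but is the one place where the argument could go wrong if one is cavalier about which regions contribute negatively.
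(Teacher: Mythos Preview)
Your argument is correct. It differs in execution from the paper's proof, though both rest on the same convex-duality between $s\log s$ and the exponential.

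The paper proceeds via the pointwise Young inequality $st \leq s\log s + e^{t-1}$ applied with $s = \rho_1/\rho_0$ and the test function $t = a\,1_A$; integrating against $\rho_0\,\d\nu$ and dividing by $a$ gives directly
\[
\int_A \rho_1\,\d\nu \ \leq\ \frac{1}{a}\,H(\rho_1|\rho_0) \ +\ \frac{e^a}{a}\int_A \rho_0\,\d\nu \ +\ O(1/a),
\]
after which one chooses $a$ large and then $\delta$ small. This is the Donsker--Varadhan variational route: one tests the entropy against an exponential tilt supported on $A$.

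Your route is the level-set (de la Vall\'ee-Poussin) decomposition: split $A$ according to whether $\rho_1/\rho_0$ exceeds a threshold $t$, bound the small-ratio piece by $t\delta$, and bound the large-ratio piece using $s \leq (s\log s)/\log t$ for $s\geq t$. The two arguments are essentially equivalent under the substitution $t = e^a$, yielding nearly identical final estimates. Your version is perhaps a bit more elementary in that it never invokes the dual pair explicitly; the paper's version is slightly slicker in that it reaches the bound in one line without a case split. Your treatment of the sign bookkeeping (the $e^{-1}$ correction for the negative part of $s\log s$) is handled correctly and is exactly the care that is needed. One minor remark: your opening paragraph announces the Young inequality $ab \leq a\log a - a + e^b$, which is the paper's tool, but your actual proof never uses it---you rely solely on the truncation. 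This is harmless, but you may want to align the preamble with the argument you carry out.
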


\begin{proof} We use the elementary fact that for all $s\geq 0$ and $t\in \R$, $st \leq s\log s + e^{t-1}$. Let $a<0$ and define $f := a1_A $ where $1_A$ denotes the indicator function of $A$. Let $g:= \rho_1/\rho_0$
$$
a\int_A \rho_1\d \nu = \int_X f g \rho_0\d \nu \leq \int_X f\log f \rho_0 + \left( e^a\int_A\rho_0\d \nu + \int_{X\backslash A}\rho_0\d \nu \right)e^{-1}
$$
Therefore
$$
\int_A \rho_1\d \nu \leq \frac1a H(\rho_1|\rho_0) + \frac{e^a}{a} \int_A \rho_0\d \nu\ .
$$
Now choose $a = 2H(\rho_1|\rho_0)/\epsilon$, and then $\delta = \frac{a\epsilon}{2}e^{-a}$ for this $a$. 
\end{proof}

In the rest of this section, suppose that $\nu(X) =1$ so that the constant function $1$ is a probability density. Our next results concern the function $\rho \mapsto H(\rho|1) = \int_X \rho \log \rho \d \nu$ on the domain consisting of probability densities on 
$(X,{\mathcal B},\nu)$, so that it is $+\infty$ in the rest of $L^1(X,{\mathcal B},\nu)$.  To simplify our notation, we write $H(\rho)$ in place of $H(\rho|1)$.
It is well known (see for instance \cite{DV83,V84}) and also easy to check that the Legendre transform $H^*$  of $H$ is the function on  $L^\infty(X,{\mathcal B},\nu)$  given by
\begin{equation*}%\label{DV}
H^*(\varphi) = \sup_{\rho\in L^1}\left\{ \int_X \varphi \rho  \d \nu - H(\rho)\ \right\} = \log\left(\int_X e^\varphi \d \nu\right)\ ,
\end{equation*}
and since both $H$ and $H^*$ are proper l.s.c. convex functions,
\begin{equation}\label{DV2}
H(\rho) = \sup_{\rho\in L^1}\left\{ \int_X \varphi \rho  \d \nu - \log\left(\int_X e^\varphi \d \nu\right)\ \right\} \ .
\end{equation}
Consequently, we have the  Young's inequality
$$
H(\rho) + H^*(\varphi) \geq  \int_X \varphi \rho  \d \nu\ .
$$
Note that $\nabla H^*(\varphi) =\left(\int_X e^\varphi \d \nu\right)^{-1} e^\varphi$, which is evidently a probability density. 
The next theorem gives the crucial strong Young's inequality that we shall use. 

\begin{thm}\label{STRYO} For all probability densities $\rho$ and all  $\varphi \in L^\infty(X,{\mathcal B},\nu)$
\begin{equation}\label{SRTYO1}
H(\rho) + H^*(\varphi) \geq  \int_X \varphi \rho  \d \nu + \frac12 \| \rho - \nabla H^*(\varphi)\|_1^2\ .
\end{equation}

\end{thm}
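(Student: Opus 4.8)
The plan is to recognize that, once the point $\nabla H^*(\varphi)$ is written out explicitly, the left-hand side of \eqref{SRTYO1} collapses to a relative entropy, after which the sharp Pinsker inequality \eqref{Pinsker} supplies exactly the claimed quadratic remainder. Write $Z := \int_X e^\varphi\,\d\nu$ and $\sigma := \nabla H^*(\varphi) = Z^{-1}e^\varphi$, which the text already observes is a probability density. Since $\varphi\in L^\infty(X,{\mathcal B},\nu)$, the function $e^\varphi$ is bounded above and below by strictly positive constants, hence so is $\sigma$; in particular $\log\sigma\in L^\infty$ and $\{\sigma=0\}$ is null, so no support or integrability issue can arise in $H(\rho|\sigma)$. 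The hypothesis already takes $\rho$ to be a probability density, and if $H(\rho)=+\infty$ the inequality is trivial, so I may also assume $H(\rho)<\infty$.

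The key step is a purely algebraic identity. Using $\varphi = \log\sigma + \log Z$ together with $\int_X\rho\,\d\nu = 1$, one finds
\begin{equation*}
\int_X \varphi\rho\,\d\nu = \int_X \rho\log\sigma\,\d\nu + \log Z,
\end{equation*}
while $H^*(\varphi)=\log Z$. Subtracting and cancelling the $\log Z$ terms gives
\begin{equation*}
H(\rho) + H^*(\varphi) - \int_X \varphi\rho\,\d\nu = \int_X \rho\log\rho\,\d\nu - \int_X \rho\log\sigma\,\d\nu = \int_X \rho\log\frac{\rho}{\sigma}\,\d\nu = H(\rho|\sigma).
\end{equation*}
Thus the defect in the ordinary Young's inequality \eqref{youngin} for $H$ is precisely the relative entropy of $\rho$ with respect to $\sigma=\nabla H^*(\varphi)$, all the integrals being finite by the reductions above.

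With this identity the conclusion is immediate: applying the sharp Pinsker (Csisz\'ar--Kullback) inequality \eqref{Pinsker} with $\rho_1=\rho$ and $\rho_0=\sigma$ yields
\begin{equation*}
H(\rho|\sigma) \geq \tfrac12\Big(\int_X |\rho-\sigma|\,\d\nu\Big)^2 = \tfrac12\|\rho - \nabla H^*(\varphi)\|_1^2,
\end{equation*}
which is exactly \eqref{SRTYO1}. Equivalently, the displayed identity shows that the $\tfrac12$-convexity of $H$ in the sense of Definition~\ref{lamOCNV} is literally the statement of sharp Pinsker, so one could instead verify that $H$ is $\tfrac12$-convex with respect to $\|\cdot\|_1$ and then invoke Lemma~\ref{SYL} with $\lambda=\tfrac12$; both routes pass through the same inequality.

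There is no deep obstacle here: the entire content is the reduction to relative entropy together with the sharp constant $\tfrac12$ in Pinsker. The only points needing care are the elementary ones already flagged, namely the strict positivity of $\sigma$ (guaranteed by $\varphi\in L^\infty$) so that $H(\rho|\sigma)$ is unambiguous, and the triviality of the cases $H(\rho)=+\infty$. If one prefers a self-contained derivation of the sharp constant rather than citing Csisz\'ar--Kullback, the standard route is to differentiate $t\mapsto H\big((1-t)\sigma + t\rho\,\big|\,\sigma\big)$ twice in $t$, bound the second derivative below by $\|\rho-\sigma\|_1^2$ using Cauchy--Schwarz against $\int_X\big((1-t)\sigma+t\rho\big)\,\d\nu=1$, and integrate twice; this is the form in which the estimate appears as an instance of the optimal $2$-uniform convexity inequality for $L^p$ referenced in the text.
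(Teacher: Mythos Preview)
Your proof is correct. The route you take is more direct than the paper's, though both ultimately rest on the same inequality. The paper first establishes the $\tfrac12$-convexity of $H$ in the sense of Definition~\ref{lamOCNV}, namely \eqref{SCONRE}, by passing to the limit $p\downarrow 1$ in the $(p-1)$-convexity of $\|\cdot\|_p^2$ proved in \cite{C16} from sharp $2$-uniform convexity, and only then invokes the abstract Lemma~\ref{SYL} to convert $\lambda$-convexity into the strong Young inequality. You instead compute the Young deficit explicitly as $H(\rho|\sigma)$ with $\sigma=\nabla H^*(\varphi)$ and apply Pinsker's inequality \eqref{Pinsker} directly; as you observe, \eqref{SCONRE} rewritten is precisely Pinsker, so the paper's limiting argument is in effect rederiving a result it has already quoted. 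Your argument is therefore shorter and avoids the detour through $L^p$ uniform convexity and the general Lemma~\ref{SYL}; the paper's route, on the other hand, situates the result within the duality framework of \cite{C16}, making explicit that it is the $p\to 1$ endpoint of the family of strong Young inequalities used there for the HLS case.
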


\begin{proof} We first claim that the function $\rho \mapsto H(\rho)$ is $\tfrac12$-convex on the set of probability densities on $(X,{\mathcal B},\nu)$. That is, for any two probability densities $\rho_1,\rho_0$, 
\begin{equation}\label{SCONRE}
H(\rho_1) - H(\rho_0) \geq \int_X( \rho_1 - \rho_0) \log \rho_0 \d \nu + \frac12 \| \rho_1 - \rho_0\|_1^2 \ .
\end{equation}

This is a limiting case of a result proved in \cite{C16}. In \cite[Theorem 2.5]{C16}, it is proved, using the sharp $2$-uniform convexity of $L^p$ spaces, $p\in (1,2]$, that for all such $p$, 
\begin{equation*}%\label{lp2cA}
\|\rho_1\|_p^2 -  \|\rho_0\|_p^2  \geq 2 \int_X( \rho_1-\rho_0)\|\rho_0\|_p^{2-p}\rho_0^{p-1}\d \nu +  (p-1)\|\rho_1-\rho_0\|_p^2\ 
\end{equation*} 
This can be rewritten as
\begin{equation*}%\label{lp2cA}
\frac{\|\rho_1\|_p^2 - 1}{p-1} -  \frac{\|\rho_0\|_p^2-1}{p-1} \geq  2\|\rho_1\|_p^{2-p} \int_X( \rho_1-\rho_0)
\left(\frac{\rho_0^{p-1} -1}{p-1} \right)\d \nu + \|\rho_1-\rho_0\|_p^2\ 
\end{equation*}
Taking the limit $p\downarrow 1$ yields \eqref{SCONRE}.   
Note that $\log \rho_0 = \nabla H(\rho |1)$, so this \eqref{SCONRE} is a strong Young's inequality in the sense of Definition~\ref{lamOCNV}. Now
\eqref{SRTYO1} follows from Lemma~\ref{SYL}.
\end{proof}

\section{Stability for Onofri's inequality on $S^2$}

Onofri's inequality \cite{O82} says that 
\begin{equation}\label{onofineq}
\frac{1}{4}\int_{S^2}|\nabla u|^2 \d \sigma \geq \log\left(\int_{S^2}e^u\d \sigma\right) -\int_{S^2} u \d \sigma \ .
\end{equation}
where $\sigma$ denotes the uniform probability measure on $S^2$.

Notice that  adding a constant to $u$ does not change either side of the inequality.  This invariance may be used to 
fix the value of $\int_{S^2}e^u\d \sigma$. In fact, \eqref{onofineq} arose in an investigation by Onofri and Virasoro \cite{OV82} of a random surface model of Polykov \cite{P81}, and one of their key insights was that the theory was better-behaved under a constraint on the total area of the random surface, and this amounts to fixing the value of  $\int_{S^2}e^u\d \sigma$.  In many ways, the natural variable is $\rho := e^u$, and several formulas in \cite{OV82,P81} are written this way. 

Therefore,  we suppose that 
\begin{equation}\label{onofineqM}
\int_{S^2}e^u\d \sigma = 1 \ ,
\end{equation}
so that we may think of $\rho := e^u$ as a probability density on $S^2$.  Let $\rho_0$ denote the uniform probability density $\rho_0(y) = 1$ for all $y\in S^2$.
There are two relative entropies associated to the pair of probability densities $\rho$ and $\rho_0$, namely
\begin{equation}\label{onofineqE1}
H(\rho || \rho_0) = \int_{S^2}\rho(\log \rho - \log \rho_0)\d \sigma = \int_{S^2} e^u u \d \sigma
\end{equation}
and
\begin{equation}\label{onofineqE2}
H(\rho_0 || \rho) = \int_{S^2}\rho_0(\log \rho_0 - \log \rho)\d \sigma = - \int_{S^2}  u \d \sigma\ .
\end{equation}
Thus, under the normalization \eqref{onofineqM} and written in terms of the variable $\rho = e^u$, the Onofri inequality can be written as
\begin{equation}\label{onofineqEV}
\int_{S^2}|\nabla \log \rho|^2 \d \sigma \geq  4 H(\rho_0 || \rho) \ .
\end{equation}
Written in this way, the Onofri inequality expresses the relative entropy  dissipation of the heat equation on the sphere for 
the ``reversed'' relative entropy in \eqref{onofineqE2}. Indeed, define $\rho(t) = e^{t\Delta}\rho$, where $\Delta$ is the Laplacian. Then for any initial probability density $\rho$, at any time $t>0$,  $\rho(t)$ is smooth and uniformly bounded below. Simple computations yield
\begin{equation}\label{onofineqEP}
\frac{{\rm d}}{{\rm d}t}H(\rho_0 || \rho) = -\int_{S^2} \frac{1}{\rho(t)}\Delta \rho(t) \d \sigma = - \int_{S^2}|\nabla \log \rho(t)|^2 \d \sigma\ .
\end{equation}
Hence for solutions $\rho(t)$ of the heat equation on $S^2$ with strictly positive initial data $\rho_0$, 
\begin{equation}\label{onofineqEP2}
H(\rho(t) || \rho)  \leq e^{-4t}H(\rho_0 || \rho)  \ .
\end{equation}

\subsection{Conformal invariance of Onofri's inequality} Let $\tau: S^2\to S^2$ be a conformal transformation. Then it is well known that for $u\in H^1(S^2)$, 
$$
\int_{S^2}|\nabla u\circ \tau |^2\d \sigma = \int_{S^2}|\nabla u |^2\d \sigma\ .
$$
The Onofri inequality is conformally invariant, but under a different action of the conformal group on  $u\in H^1(S^2)$, namely
\begin{equation}\label{onofineqCINV}
u \mapsto u\circ \tau + \log \mathcal{J}_\tau := U_\tau\ ,
\end{equation}
where $\mathcal{J}_\tau$ is the Jacobian of the conformal transformation $\tau$.  Under this action of the conformal group, $\int_{S^2}|\nabla u|^2\d \sigma$ is no invariant, but $\int_{S^2}e^u \d\sigma$ is. With $U_\tau$ given by \eqref{onofineqCINV},
\begin{equation*}%\label{expCINV}
\int_{S^2}e^{U_\tau}\d \sigma = \int_{S^2}e^{u\circ \tau}\mathcal{J}_\tau d\sigma = \int_{S^2}e^u\d \sigma \ . 
\end{equation*}
Thus the normalization condition \eqref{onofineqM} under which the Onofri inequality reduces to \eqref{onofineqEV} is conformally 
invariant under this action of the conformal group, which henceforth is the only one that we shall refer to in connection with the Onofri inequality.

To display the conformal invariance, and to make use of it, the stereographic projection turns out to be very convenient. We use the stereographic projection
$\mathcal{S}: S^2\to \R^2$ based on the ``south pole'' $(0,0,-1)\in S^2$, and we recall some relevant formulas. We denote points in $S^2$ by $\omega = (\omega_1,\omega_2,\omega_3)$ Then for $\omega\in S^2$,  $\mathcal{S}(\omega)$ is the point at which the line through $(0,0,-1)$ and $\omega$ intersects the plane $\omega_3 =0$. That is,
\begin{equation*}%\label{stereo}
\mathcal{S}(\omega_1,\omega_2,\omega_3) = \begin{cases} \displaystyle{\frac{(\omega_1,\omega_2)}{1+\omega_3}} & \omega_3 > -1\\
\phantom{xx}\infty & \omega_3 = -1\end{cases}\ .
\end{equation*}
We use coordinates $x = (x_1,x_2)$ in $\R^2$. Then the inverse transformation is given by
\begin{equation*}%\label{stereoinv}
\mathcal{S}^{-1}(x_1,x_2) = (1+|x|^2)^{-1}(x_1,x_2, 1- |x|^2))\ .
\end{equation*}
It follows that ${\displaystyle \omega_3 = \frac{1-|x|^2}{1+|x|^2}}$,  from which one has the useful formula
\begin{equation}\label{stereonorm}
|x|^2 = \frac{1+\omega_3}{1-\omega_3}\ .
\end{equation}

The Jacobian of the stereographic projection is  ${\displaystyle \frac{4}{(1+|x|^2)^2}}$ image of $\sigma$  under the stereographic projection is 
$$
\d \mu := \frac{1}{\pi}\frac{1}{(1+|x|^2)^2}\d x\ .
$$
As in \cite{CL88}, we now write the Onofri inequality as an inequality for functions on $\R^2$ 
using the stereographic projection in order to display its conformal invariance. 
Define the function $\varphi$ on $\R^2\cup\{\infty\}$ by 
\begin{equation*}%\label{varphidef}
\varphi(x) = \begin{cases} -2\ln(1+|x|^2) -\log \pi &x \neq \infty\\
\phantom{xxx}0 & x= \infty
\end{cases}\ .
\end{equation*}
Then
\begin{equation}\label{IBPF}
\d \mu = -\frac{1}{8\pi}\Delta \varphi(x)\d x = e^{\varphi(x)}\d x\ .
\end{equation}
Given a function $u$ on $S^2$ that is continuous at $\omega_0$, define the function $f$ on $\R^2$ by 
\begin{equation}\label{furel}
f= u\circ \mathcal{S}^{-1} + \varphi\ .
\end{equation}
Simple computations using \eqref{IBPF} show that with $\omega_0 := (0,0,-1)$ and $u$ continuous near $\omega_0$,
\begin{equation}\label{COINVA}
\frac{1}{16\pi} \int_{\R^2}(|\nabla |^2 - |\nabla \varphi|^2)\d x = \frac14\int_{S^2}|\nabla u|^2\d \sigma + \int_{S^2}u\d \sigma + u(\omega_0)\ ,
\end{equation}
and that
\begin{equation}\label{COINVB}
\int_{\R^2}e^{f} \d x =  \int_{S^2}e^u\d \sigma \ .
\end{equation}

The left sides of \eqref{COINVA} and \eqref{COINVB} are invariant under $f(x) \mapsto e^{2t}f(e^tx = x_0)$, $t\in \R$, $x_0\in \R$; i.e., 
under dilations and translations, and these transformation do not move the point at infinity.   Hence the Onofri inequality is invariant under these transformations, and it is evidently invariant under rotations on $S^2$. Altogether, we have invariance under the full $6$-dimensional conformal group of $S^2$.

It follows that  if $u = \log(\mathcal{J}_\tau)$, then there is equality in \eqref{onofineq}. This yields a $3$-parameter family of optimizers of the Onofri inequality since when $\tau$ is a rotation on $S^2$, $\mathcal{J}_\tau =1$. We get a simple expression for this family by first working out the Jacobians of the dilations, and then rotating those using non-axial rotations. 

Take the constant optimizer $u = 0$ on $S^2$, and  and let $\tau$ correspond to the Euclidean dilations $x\mapsto 4e^t x$, $t\in\R$. To compute the conformal transform $U_\tau$, 
consider the image $f$ of $u$ under the stereographic projection as given in \eqref{furel}, so that
$f = -2\log(1+|x|^2) - \log \pi $. Applying $\tau$, this becomes $-2 \log (1+ e^{2t}|x|^2) \log \pi - 2t$, and using \eqref{furel} once more,
$$
U_\tau\circ \mathcal{S}^{-1} = 2t - 2\log(1+e^{2t}|x|^2) + 2\log(1+|x|^2)\ .
$$
Now using \eqref{stereonorm}, we have
$U_\tau(\omega) = -2\log(\cosh t + \sinh t \omega_3)$.  Now using a rotation that moves the north pole $(0,0,1)$ to any other point ${\bf n}\in S^2$, we 
get the set of  optimizers
\begin{equation}\label{Mspec}
\mathcal{M}_O := \{ \ u_{t,{\bf n}}(\omega) =   -2\log(\cosh t + \sinh t {\bf n}\cdot \omega)\ ,\quad t\geq 0\ , {\bf n}\in S^2\ \}\ .
\end{equation}
(We have restricted to $t\geq 0$ since for $t<0$, $u_{t,{\bf n}} = u_{-t,-{\bf n}}$).
By construction, for each $t$ and ${\bf n}$, $\int_{S^2}e^{u_{t,{\bf n}}}\d \sigma = 1$. Onofri 
proved that every optimizer that  satisfies the normalization condition \eqref{onofineqM} is of this form. That is, ${\mathcal M}_O$ is the 
manifold of all normalized optimizers of the Onofri inequality. Another proof of this, using competing symmetries \cite{CL90}, may be found in \cite{CL88}.

\begin{lem}\label{closest} Let $u$ be measurable function on $S^2$ such that $\int_{S^2}e^{u}\d \sigma = 1$ and $\int_{S^2}|\nabla u|^2\d \sigma < \infty$.
Then there exists $v_0\in {\mathcal M}_O$ such that $H(e^u|e^{v_0}) \leq H(e^u|e^{v})$ for all $v\in \mathcal{M}_O$. 
\end{lem}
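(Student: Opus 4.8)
The plan is to prove existence of a minimizer by the direct method of the calculus of variations, using the explicit parametrization of $\mathcal{M}_O$ in \eqref{Mspec} by $(t,{\bf n})\in[0,\infty)\times S^2$. Writing $\Phi(t,{\bf n}):=H(e^u|e^{u_{t,{\bf n}}})$, the first observation is that
\[
\Phi(t,{\bf n})=\int_{S^2}e^u(u-u_{t,{\bf n}})\,\d\sigma=H(e^u|1)+2\int_{S^2}e^u\log(\cosh t+\sinh t\,{\bf n}\cdot\omega)\,\d\sigma,
\]
where $H(e^u|1)=\int_{S^2}e^u u\,\d\sigma$ is a finite constant independent of $(t,{\bf n})$. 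Its finiteness follows from $\int_{S^2}|\nabla u|^2\,\d\sigma<\infty$ via the Onofri/Moser--Trudinger bound applied to $2u$, which controls $\int_{S^2}e^{2u}\,\d\sigma$ and hence $\int_{S^2}e^u u^+\,\d\sigma$, while the negative part is controlled by $\int_{S^2}|u|\,\d\sigma<\infty$. Since $\Phi(0,{\bf n})=H(e^u|1)<\infty$ (because $u_{0,{\bf n}}\equiv 0$) while $\Phi\geq 0$ everywhere, the infimum $m:=\inf_{(t,{\bf n})}\Phi(t,{\bf n})$ is finite, and minimizing $\Phi$ is equivalent to minimizing $H(e^u|e^v)$ over $v\in\mathcal{M}_O$.

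Next I take a minimizing sequence $(t_k,{\bf n}_k)$. By compactness of $S^2$ I pass to a subsequence with ${\bf n}_k\to{\bf n}_*$. \emph{The crux is to show that $\{t_k\}$ stays bounded.} Suppose for contradiction that $t_k\to\infty$ along a further subsequence, and set $\nu_k:=e^{u_{t_k,{\bf n}_k}}\,\d\sigma$. The density $(\cosh t+\sinh t\,{\bf n}\cdot\omega)^{-2}$ is maximal at $\omega=-{\bf n}$, and for the fixed cap $C_\varepsilon:=\{\omega:{\bf n}_*\cdot\omega\leq-1+\varepsilon\}$ about $-{\bf n}_*$ one has, for $k$ large enough that ${\bf n}_k\cdot\omega\geq-1+\varepsilon/2$ off $C_\varepsilon$, the bound $\nu_k(S^2\setminus C_\varepsilon)\leq(\tfrac{\varepsilon}{2}\sinh t_k)^{-2}\to 0$; thus $\nu_k$ concentrates at $-{\bf n}_*$. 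Writing $a_\varepsilon:=\int_{C_\varepsilon}e^u\,\d\sigma$, which tends to $0$ as $\varepsilon\downarrow 0$ since $e^u\,\d\sigma$ is absolutely continuous, the data-processing (partition) inequality for relative entropy applied to $\{C_\varepsilon,S^2\setminus C_\varepsilon\}$ gives
\[
\Phi(t_k,{\bf n}_k)\geq a_\varepsilon\log\frac{a_\varepsilon}{\nu_k(C_\varepsilon)}+(1-a_\varepsilon)\log\frac{1-a_\varepsilon}{\nu_k(S^2\setminus C_\varepsilon)}.
\]
Fixing $\varepsilon$ small enough that $a_\varepsilon<1$ and letting $k\to\infty$, the first term stays bounded (as $\nu_k(C_\varepsilon)\to1$) while the second term blows up to $+\infty$ (as $\nu_k(S^2\setminus C_\varepsilon)\to0$ and $1-a_\varepsilon>0$). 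Hence $\Phi(t_k,{\bf n}_k)\to+\infty$, contradicting $\Phi(t_k,{\bf n}_k)\to m<\infty$. Therefore $\{t_k\}$ is bounded and, along a subsequence, $t_k\to t_*\in[0,\infty)$.

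Finally I upgrade convergence of parameters to convergence of values. On each slab $[0,T]\times S^2$ the integrand $\log(\cosh t+\sinh t\,{\bf n}\cdot\omega)$ is bounded in absolute value by $T$, because $\cosh t+\sinh t\,{\bf n}\cdot\omega\in[e^{-t},e^t]$, and it is continuous in $(t,{\bf n})$ for each $\omega$; dominated convergence with dominating function $2Te^u\in L^1(\d\sigma)$ then shows $\Phi$ is continuous on $[0,T]\times S^2$. Applying continuity at $(t_*,{\bf n}_*)$ yields $\Phi(t_*,{\bf n}_*)=\lim_k\Phi(t_k,{\bf n}_k)=m$, so $v_0:=u_{t_*,{\bf n}_*}\in\mathcal{M}_O$ realizes the minimum, which is the assertion. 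The only delicate point is the coercivity step ruling out $t_k\to\infty$; all else is routine compactness and continuity. One could replace the explicit partition estimate by the weak lower semicontinuity of relative entropy (noting $\nu_k\rightharpoonup\delta_{-{\bf n}_*}$ and $H(e^u\,\d\sigma\,|\,\delta_{-{\bf n}_*})=+\infty$), but the partition inequality keeps the argument self-contained.
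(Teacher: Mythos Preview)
Your proof is correct and follows the same overall architecture as the paper's: parametrize $\mathcal{M}_O$ by $(t,{\bf n})\in[0,\infty)\times S^2$, establish continuity of $\Phi$ on compact slabs via an $L^1$-dominated bound on $u_{t,{\bf n}}$, and then argue coercivity as $t\to\infty$ so that the infimum is attained on a compact parameter set.

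The one genuine difference is in the coercivity step. The paper invokes its Lemma~\ref{LDL} (finite relative entropy forces $e^u$ to put small mass on sets of small $\sigma$-measure) together with the pointwise divergence of $u_{t,{\bf n}}$ away from the concentration point, to show $-\int e^u u_{t,{\bf n}}\,\d\sigma\to\infty$ uniformly in ${\bf n}$. You instead use the data-processing (partition) inequality for relative entropy directly on the two-set partition $\{C_\varepsilon,S^2\setminus C_\varepsilon\}$, exploiting that $\nu_k(S^2\setminus C_\varepsilon)\to 0$ while $(1-a_\varepsilon)>0$. Your route is slightly more self-contained (it does not call on Lemma~\ref{LDL}) and handles the moving ${\bf n}_k$ cleanly by fixing the cap at $-{\bf n}_*$; the paper's route gives a uniform-in-${\bf n}$ threshold $t_\epsilon$ in one stroke. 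Both yield the same conclusion with comparable effort.
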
 

\begin{proof} By the Moser-Trudinger inequality \cite{M71}, for some $\alpha>0$, $\int_{S^2}e^{\alpha u^2}\d \sigma < \infty$, and hence 
\begin{equation}\label{ONOPT1}
(t,{\bf n}) \mapsto \int_{S^2} e^u (u -u_{t,{\bf n}})\d \sigma = H(e^u|e^{u_{t,{\bf n}}})
\end{equation}
 is a continuous real-valued function on $[1,\infty)\times S^2$. 
For any fixed ${\bf n}$, as $t$ tends to $\infty$, $u_{t,{\bf n}}$ converges uniformly to 
$-\infty$ in the complement of any neighborhood of 
$\{{\bf n}\}$. More precisely, define $A_{\epsilon,{\bf n}} := \{\ \omega :\ \omega\cdot{\bf n} \geq 1-\epsilon \ \}$.   Since $H(e^u|1)$ is finite, Lemma~\ref{LDL} gives us a bound, independent of ${\bf n}$ by symmetry, on how small $\epsilon$ must be to ensure that 
$\int_{ A_{\epsilon,{\bf n}}}e^u \d \sigma < 1/2$.
Choose such an $\epsilon >0$. Then  
\begin{equation}\label{ONOPT}
-\int_{S^2} e^u u_{t,{\bf n}}\d \sigma \geq   \int_{S^2} e^u (u_{t,{\bf n}})_-  \d \sigma \geq \int_{A_{\epsilon,{\bf n}}} e^u (u_{t,{\bf n}})_-\d \sigma\ .
\end{equation}
For any $\lambda>0$, there is a $t_\epsilon$ so that $u_{t,{\bf n}} < -\lambda$ on $A_{\epsilon,{\bf n}}$ for all  $t>t_\epsilon$, 
and again by symmetry, 
$t_\epsilon$ does not depend on ${\bf n}$.  It then follows from \eqref{ONOPT} that for all $t>t_\epsilon$, $-\int_{S^2} e^u u_{t,{\bf n}}\d \sigma \geq \lambda/2$.  Choosing $\lambda = 3H(e^u|1)$, it follows that the minimum of  continuous function \eqref{ONOPT1} 
over the compact set $[0,t_\epsilon]\times S^2$ is is a global minimum.  
\end{proof}

Lemma~\ref{closest} has a simple corollary that is a result of Onofri \cite{O82}, whose argument was topological. In fact, this is a key point in Onofri's proof; 
it allowed him to apply then recent work of Aubin \cite{A79} on the functional $J(u) := \frac14 \int_{S^2}|\nabla u|^2\d \sigma 
 -\log\left(\int_{S^2}e^u\d \sigma\right)  + \int_{S^2}u \d \sigma$. 

In the present context, it is desirable to relate Corollary~\ref{ONCR} below to a ``distance'' minimization problem, as in the proof given here. 

\begin{cor}\label{ONCR} Let $u$ be measurable function on $S^2$ such that $\int_{S^2}e^{u}\d \sigma = 1$ and $\int_{S^2}|\nabla u|^2\d \sigma < \infty$.
 Then there exists a conformal transformation $\tau$ such that with $U_\tau = u\circ \tau + \log(J_\tau)$,  
\begin{equation}\label{ONOPT2}
\int_{S^2}e^{U_\tau} \omega \cdot {\bf n}\d \sigma = 0
\end{equation}
for all ${\bf n}\in S^2$. 
\end{cor}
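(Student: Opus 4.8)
The plan is to read \eqref{ONOPT2} as a statement about the \emph{barycenter} of the probability density $e^{U_\tau}$ on $S^2$. Since $\mathbf{n}$ ranges over all of $S^2$ and $\omega\mapsto\omega\cdot\mathbf{n}$ is linear in $\mathbf{n}$, the family of scalar conditions \eqref{ONOPT2} is equivalent to the single vector equation $\int_{S^2}e^{U_\tau}\,\omega\,\d\sigma = 0 \in \R^3$. First I would record how this barycenter transforms under the conformal action: writing $e^{U_\tau}(\omega)=e^{u(\tau(\omega))}J_\tau(\omega)$ and applying the change-of-variables identity $\int_{S^2}(g\circ\tau)\,J_\tau\,\d\sigma = \int_{S^2}g\,\d\sigma$ (the identity underlying the conformal invariance of $\int_{S^2}e^u\,\d\sigma$) with $g(\eta)=e^{u(\eta)}\tau^{-1}(\eta)$ gives
\begin{equation*}
\int_{S^2}e^{U_\tau}(\omega)\,\omega\,\d\sigma(\omega) = \int_{S^2}e^{u}(\eta)\,\tau^{-1}(\eta)\,\d\sigma(\eta)\ .
\end{equation*}
So the goal reduces to producing a conformal $\tau$ with $\int_{S^2}e^u\,\tau^{-1}\,\d\sigma = 0$.

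Next I would obtain such a $\tau$ from the variational problem already solved in Lemma~\ref{closest}. That lemma provides a minimizer $v_0\in\mathcal{M}_O$ of $v\mapsto H(e^u|e^v)$, and its proof shows the minimum is attained at a finite parameter value $t_0\leq t_\epsilon<\infty$. I would reparametrize $\mathcal{M}_O$ by the open unit ball: setting $\mathbf{a}=\tanh(t)\,\mathbf{n}\in B^3\subset\R^3$, the element $u_{t,\mathbf{n}}$ becomes
\begin{equation*}
v_{\mathbf{a}}(\omega) = \log(1-|\mathbf{a}|^2) - 2\log(1+\mathbf{a}\cdot\omega)\ ,
\end{equation*}
a parametrization that is smooth even at the center $\mathbf{a}=0$, thereby removing the coordinate degeneracy of $(t,\mathbf{n})$ at $t=0$. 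The bound $t_0<\infty$ translates into $|\mathbf{a}_0|=\tanh t_0<1$, so the minimizer $\mathbf{a}_0$ is an \emph{interior} point of $B^3$. Since on a neighborhood of $\mathbf{a}_0$ the quantity $1+\mathbf{a}\cdot\omega$ is bounded below and $e^u$ is integrable, differentiation under the integral is justified, and the stationarity condition $\nabla_{\mathbf{a}}H(e^u|e^{v_{\mathbf{a}}})\big|_{\mathbf{a}_0}=0$ reads
\begin{equation*}
\frac{\mathbf{a}_0}{1-|\mathbf{a}_0|^2} + \int_{S^2}e^u\,\frac{\omega}{1+\mathbf{a}_0\cdot\omega}\,\d\sigma = 0\ .
\end{equation*}

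The final step is to identify this Euler--Lagrange equation with the barycenter condition. The optimizer $v_{\mathbf{a}}$ equals $\log J_{\tau_{\mathbf{a}}}$ for the conformal ``boost'' $\tau_{\mathbf{a}}$, whose explicit M\"obius form on $S^2\subset\R^3$ is rational in $\omega$ with denominator $1+\mathbf{a}\cdot\omega$. Inserting that form into $\int_{S^2}e^u\,\tau_{\mathbf{a}_0}\,\d\sigma$ and splitting off the $\omega$-part from the $\mathbf{a}_0$-part, the $\omega$-part is exactly the integral appearing in the stationarity equation, while the scalar coefficients multiplying $\mathbf{a}_0$ are evaluated using the elementary identity $\frac{\omega\cdot\mathbf{a}_0}{1+\omega\cdot\mathbf{a}_0}=1-\frac{1}{1+\omega\cdot\mathbf{a}_0}$ together with $\int_{S^2}e^u\,\d\sigma=1$. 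A short computation then collapses the stationarity equation into $\int_{S^2}e^u\,\tau_{\mathbf{a}_0}(\omega)\,\d\sigma=0$. Taking $\tau:=\tau_{\mathbf{a}_0}^{-1}=\tau_{-\mathbf{a}_0}$ gives $\tau^{-1}=\tau_{\mathbf{a}_0}$, so the displayed reduction yields $\int_{S^2}e^{U_\tau}\omega\,\d\sigma=0$, which is \eqref{ONOPT2}.

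I expect the main obstacle to be precisely this last identification: one must pin down the correct explicit boost form of $\tau_{\mathbf{a}}$ (and the correct direction, $\mathbf{a}_0$ versus $-\mathbf{a}_0$) and then carry out the bookkeeping matching the gradient of the relative entropy to the weighted barycenter. The conceptual content---that ``closest optimizer in relative entropy'' and ``centered barycenter'' are the same stationarity condition transported across the conformal group---is clean, but the algebra relating the two requires care with the boost parametrization and the auxiliary scalar integrals.
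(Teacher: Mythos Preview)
Your proposal is correct, and it starts from the same variational input as the paper (the minimizer $v_0\in\mathcal{M}_O$ furnished by Lemma~\ref{closest}), but the execution diverges from the paper's in a way worth noting. You differentiate $H(e^u|e^{v_{\mathbf a}})$ at the interior minimizer $\mathbf a_0$ and then must identify the resulting Euler--Lagrange equation with the barycenter condition $\int e^u\,\tau_{\mathbf a_0}\,\d\sigma=0$ via the explicit M\"obius formula for the boost $\tau_{\mathbf a}$; this works (the parallel component requires the auxiliary identity $\int e^u(1+\mathbf a_0\cdot\omega)^{-1}\d\sigma=(1-|\mathbf a_0|^2)^{-1}$, which does follow from the parallel Euler--Lagrange equation together with $\int e^u\d\sigma=1$), but it is exactly the bookkeeping you flag as the main obstacle. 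The paper sidesteps all of this by first using the conformal invariance of the relative entropy: applying the conformal transformation $\tau$ that sends $v_0$ to the constant optimizer $0$, the transformed density $e^{U_\tau}$ now has its relative-entropy minimum over $\mathcal{M}_O$ at $t=0$, and differentiating $H(e^{U_\tau}|e^{u_{t,\mathbf n}})$ in $t$ at $t=0$ gives \eqref{ONOPT2} in one line, since $\partial_t u_{t,\mathbf n}|_{t=0}=-2\,\omega\cdot\mathbf n$. In short, the paper moves the critical point to the origin before differentiating, while you differentiate at the critical point and then translate; your conceptual summary (``closest optimizer in relative entropy'' equals ``centered barycenter'' transported by the conformal group) is precisely what the paper's argument makes transparent without the explicit boost algebra.
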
 

\begin{proof} Let $v_0\in \mathcal{M}_O$ be chosen to minimize $H(e^u|e^v)$, noting that the minimizer exists by Lemma~\ref{closest}. Now let $\tau$ be the conformal transformation under which the image of $v_0$ is $1$. Then $H(e^{U_\tau}|1) = H(e^u|e^{v_0})$ and 
$H(e^{U_\tau}|1) \leq H(e^{U_\tau}|u_{t,{\bf n}})$ for all $t$ and ${\bf n}$. Now differentiating in $t$ at $t=0$ yields \eqref{ONOPT2}.
\end{proof}

\begin{rem} Let $\rho$ be a probability density on $S^2$. The condition that $\int_{S^2} \rho  \omega \cdot {\bf n}\d \sigma = 0$ for all $\omega$ 
means that $\rho$ cannot be too concentrated around any $\omega \in S^2$, since if $\rho$ were concentrated around some $\omega$, 
the integral would have a value close to $1$ for that $\omega$.  Corollary~\ref{ONCR} says that for any $\rho$, there is a conformal image of $\rho$ that is suitably far from being concentrated at a point.  For this reason, concentration-compactness arguments do not enter Onofri's proof.
\end{rem}

Let $u$ and $v$ be as in Lemma~\ref{closest}. Let $\tau^{-1}$ be the conformal transformation such that $v= \log(\mathcal {J}_\tau)$.  
Let $U_\tau$ be the image of $u$ under the conformal transformation $\tau$. Evidently the conformal image of $v$ under $\tau$ is the constant 
optimizer $0$.   By the conformal invariance of the relative entropy,
$H(e^{U_\tau}|1) \leq H(e^{U_\tau}|e^{u_{t,{\bf n}}})$, or, what is the same
$$
0 \geq \int_{S^2} U_\tau e^{u_{t,{\bf n}}}v\d \sigma
$$
for all $t$ and ${\bf n}$, with equality at $t=0$. Making a Taylor expansion in $t$, we conclude
$$
\int_{S^2} U_\tau  (\omega\cdot {\bf n})\d \sigma = 0
$$
for all ${\bf n}$. 

A strong stability inequality for Onofri's inequality can be deduced from a theorem of Gui and Moradifam \cite{GM18} that  proved a conjecture of  
Chang and Yang \cite{CY88} that has its roots in work of Aubin \cite{A79} and Moser \cite{M71}.  See \cite{GM18} for more on the history of the solution of this conjecture, to which many people contributed.

\begin{thm}\label{ONSTX} Let $u$ be measurable function on $S^2$ such that $\int_{S^2}e^{u}\d \sigma = 1$ and 
$\int_{S^2}|\nabla u|^2\d \sigma < \infty$. Then
\begin{equation}\label{ONSTX1}
\frac{1}{4}\int_{S^2}|\nabla u|^2 \d \sigma - \ln\left(\int_{S^2}e^u\d \sigma\right) + \int_{S^2} u \d \sigma \geq 
\frac18 \inf_{v\in \mathcal{M}_O} \left\{ \int_{S^2}|\nabla u -  \nabla v|^2\d \sigma\ \right\}\ .
\end{equation}
and
\begin{equation}\label{ONSTX1B}
\frac{1}{4}\int_{S^2}|\nabla u|^2 \d \sigma - \ln\left(\int_{S^2}e^u\d \sigma\right) + \int_{S^2} u \d \sigma \geq 
\frac12 \inf_{v\in \mathcal{M}_O} \left\{\ H(e^v|e^u)\ \right\}\ .
\end{equation}

 \end{thm}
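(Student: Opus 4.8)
The plan is to reduce the general statement to the \emph{balanced} case---where the center of mass constraint \eqref{ONOPT2} holds---and there invoke the improved Onofri inequality of Gui and Moradifam \cite{GM18} (resolving the Chang-Yang conjecture \cite{CY88}). Given $u$ with $\int_{S^2}e^u\d\sigma=1$ and $\int_{S^2}|\nabla u|^2\d\sigma<\infty$, Corollary~\ref{ONCR} produces a conformal $\tau$ for which $U_\tau=u\circ\tau+\log J_\tau$ satisfies $\int_{S^2}e^{U_\tau}\,\omega\cdot{\bf n}\,\d\sigma=0$ for all ${\bf n}$; moreover $\int_{S^2}e^{U_\tau}\d\sigma=\int_{S^2}e^u\d\sigma=1$ and the Dirichlet energy stays finite. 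Since the Onofri functional $J(u):=\frac14\int_{S^2}|\nabla u|^2\d\sigma-\log(\int_{S^2}e^u\d\sigma)+\int_{S^2}u\,\d\sigma$ is invariant under the conformal action $u\mapsto U_\tau$ (established in the preceding subsection), $J(u)=J(U_\tau)$, so it suffices to prove both bounds for the balanced, normalized representative $U_\tau$.

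The single external ingredient is the balanced estimate: for normalized $w$ obeying the center of mass constraint, Gui-Moradifam improves the constant in \eqref{onofineq} from $\tfrac14$ to $\tfrac18$, i.e.\ $\tfrac18\int_{S^2}|\nabla w|^2\d\sigma\geq-\int_{S^2}w\,\d\sigma$. Since $\log(\int_{S^2}e^w\d\sigma)=0$, substituting $\int_{S^2}w\,\d\sigma\geq-\tfrac18\int_{S^2}|\nabla w|^2\d\sigma$ into $J(w)=\tfrac14\int_{S^2}|\nabla w|^2\d\sigma+\int_{S^2}w\,\d\sigma$ gives
\begin{equation*}
J(w)\geq\tfrac18\int_{S^2}|\nabla w|^2\d\sigma
\end{equation*}
for every balanced, normalized $w$. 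This is the only place the Gui-Moradifam theorem enters.

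For \eqref{ONSTX1} I would apply this to $w=U_\tau$ and then pass to the distance to $\mathcal{M}_O$. Because the constant $0$ lies in $\mathcal{M}_O$, $\int_{S^2}|\nabla U_\tau|^2\d\sigma\geq\inf_{V\in\mathcal{M}_O}\int_{S^2}|\nabla U_\tau-\nabla V|^2\d\sigma$. The point is that this Dirichlet distance is conformally invariant: for $v\in\mathcal{M}_O$ set $V_\tau=v\circ\tau+\log J_\tau$; the additive terms cancel, $U_\tau-V_\tau=(u-v)\circ\tau$, so $\int_{S^2}|\nabla U_\tau-\nabla V_\tau|^2\d\sigma=\int_{S^2}|\nabla(u-v)|^2\d\sigma$ by conformal invariance of the Dirichlet energy under precomposition with $\tau$. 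By the cocycle identity $\log J_{\rho\circ\tau}=\log J_\rho\circ\tau+\log J_\tau$ the map $v\mapsto V_\tau$ permutes $\mathcal{M}_O$, so the two infima agree, and chaining with $J(u)=J(U_\tau)$ yields \eqref{ONSTX1}.

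For \eqref{ONSTX1B} I would combine the balanced estimate with \emph{plain} Onofri rather than the improved one. Applied to $U_\tau$, ordinary Onofri gives $\tfrac14\int_{S^2}|\nabla U_\tau|^2\d\sigma\geq-\int_{S^2}U_\tau\,\d\sigma=H(e^0|e^{U_\tau})$, hence $\tfrac18\int_{S^2}|\nabla U_\tau|^2\d\sigma\geq\tfrac12 H(e^0|e^{U_\tau})$, and with the balanced estimate $J(U_\tau)\geq\tfrac12 H(e^0|e^{U_\tau})$. Since $0\in\mathcal{M}_O$ gives $H(e^0|e^{U_\tau})\geq\inf_{V\in\mathcal{M}_O}H(e^V|e^{U_\tau})$, and since relative entropy is invariant under the simultaneous pullbacks $e^{U_\tau}\d\sigma=\tau^*(e^u\d\sigma)$, $e^{V_\tau}\d\sigma=\tau^*(e^v\d\sigma)$ (so $H(e^{V_\tau}|e^{U_\tau})=H(e^v|e^u)$ and again $v\mapsto V_\tau$ permutes $\mathcal{M}_O$), I conclude $J(u)=J(U_\tau)\geq\tfrac12\inf_{v\in\mathcal{M}_O}H(e^v|e^u)$. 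The genuinely hard input is the Gui-Moradifam balanced estimate with its sharp constant; the main obstacle internal to the argument is the bookkeeping showing that both the Dirichlet distance and the relative entropy to $\mathcal{M}_O$ are \emph{exactly} conformally invariant, which rests on $\mathcal{M}_O$ being a single conformal orbit and on the cocycle property of the Jacobians.
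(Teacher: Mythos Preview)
Your proposal is correct and follows essentially the same approach as the paper: reduce to the balanced case via Corollary~\ref{ONCR}, apply the Gui--Moradifam improved Onofri inequality there, and then transfer back using conformal invariance. The only difference is presentational: for \eqref{ONSTX1} the paper directly computes $\int_{S^2}|\nabla U_\tau|^2\,\d\sigma = \int_{S^2}|\nabla u - \nabla\log(\mathcal{J}_{\tau^{-1}})|^2\,\d\sigma$ and observes that $\log(\mathcal{J}_{\tau^{-1}})\in\mathcal{M}_O$, whereas you package the same computation as the conformal invariance of the Dirichlet distance to $\mathcal{M}_O$ via the cocycle identity; for \eqref{ONSTX1B} the paper simply asserts conformal equivalence at the end, while you spell out the invariance of the relative entropy under simultaneous pullback. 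Your bookkeeping is, if anything, more complete than the paper's, but the strategy and the key input are identical.
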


\begin{proof} The theorem of Gui and Moradifam \cite{GM18} states that for
 all $u$ satisfying 
\begin{equation}\label{onofineqRTC}
\int_{S^2} e^{u(\omega)} \omega\cdot {\bf n}\d \sigma = 0\quad{\rm for\ all}\  {\bf n}\in S^2, 
 \end{equation}
\begin{equation}\label{onofineqRT}
\frac{1}{8}\int_{S^2}|\nabla u|^2 \d \sigma - \ln\left(\int_{S^2}e^u\d \sigma\right) + \int_{S^2} u \d \sigma \geq 0 \ .
\end{equation}
(The initial result in this direction was Aubin's proof \cite{A79} that under this constraint, the functional in \eqref{onofineqRT} is bounded below. 
This plays a key role in Onofri's proof of his inequality.)
That is, for all such $u$,
\begin{equation}\label{onofineqRT2}
\frac{1}{4}\int_{S^2}|\nabla u|^2 \d \sigma - \ln\left(\int_{S^2}e^u\d \sigma\right) + \int_{S^2} u \d \sigma \geq \frac{1}{8}\int_{S^2}|\nabla u|^2 \d \sigma
\end{equation}

We apply this to  $U_\tau$ as constructed in corollary~\ref{ONCR} which satisfies the orthogonality constraint \eqref{ONOPT2}, and we obtain
\begin{equation}\label{onofineqRT3}
\frac{1}{4}\int_{S^2}|\nabla U_\tau|^2 \d \sigma - \ln\left(\int_{S^2}e^{U_\tau} \d \sigma\right) + \int_{S^2} U_\tau \d \sigma \geq \frac{1}{8}\int_{S^2}|\nabla U_\tau |^2 \d \sigma
\end{equation}

By the definition of $U_\tau = u\circ \tau + \log(\mathcal{J}_\tau)$, and by the chain rule  $\log(\mathcal{J}_{\tau^{-1}}\circ \tau) = -\log(\mathcal{J}_\tau)$
so that
$$
\int_{S^2}|\nabla U_\tau |^2 \d \sigma = \int_{S^2}|\nabla u\circ \tau -  \nabla \log(\mathcal{J}_{\tau^{-1}}\circ \tau)|^2 \d \sigma =
 \int_{S^2}|\nabla u -  \nabla \log(\mathcal{J}_{\tau^{-1}})|^2\d \sigma\ ,
$$
where the last equality is the usual conformal invariance of  $f\mapsto \int_{S^2}|\nabla f|^2\d \sigma$. 
Combining this with the conformal invariance of the Onofri functional, we can rewrite \eqref{onofineqRT3} as 
$$\frac{1}{4}\int_{S^2}|\nabla u|^2 \d \sigma - \ln\left(\int_{S^2}e^u\d \sigma\right) + \int_{S^2} u \d \sigma \geq 
\frac18 \int_{S^2}|\nabla u -  \nabla \log(\mathcal{J}_{\tau^{-1}})|^2\d \sigma\ .$$
Since $\log(\mathcal{J}_{\tau^{-1}})\in \mathcal{M}$, this proves \eqref{ONSTX1}. 
Next, using the Onofri inequality itself, written in the form \eqref{onofineqEV}, on the right side of \eqref{onofineqRT3} yields
\begin{equation*}%\label{onofineqRT3B}
\frac{1}{4}\int_{S^2}|\nabla U_\tau|^2 \d \sigma - \ln\left(\int_{S^2}e^{U_\tau} \d \sigma\right) + \int_{S^2} U_\tau \d \sigma \geq \frac{1}{2}H(e^{1|U_\tau} )\ .
\end{equation*}
This is conformally equivalent to \eqref{ONSTX1B}.
\end{proof}

\begin{cor}\label{L1STAB} Let $u$ be a  function on $S^2$ such that $\int_{S^2}e^{u}\d \sigma = 1$ and $\int_{S^2}|\nabla u|^2\d \sigma < \infty$. Then
\begin{equation*}%\label{ONSTX1BB}
\frac{1}{4}\int_{S^2}|\nabla u|^2 \d \sigma - \ln\left(\int_{S^2}e^u\d \sigma\right) + \int_{S^2} u \d \sigma \geq 
\frac{1}{4} \inf_{v\in \mathcal{M}_O} \left\{\ \|e^u - e^v\|_1^2\ \right\}\ .
\end{equation*}
\end{cor}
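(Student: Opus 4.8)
The plan is to obtain this directly by chaining the second bound \eqref{ONSTX1B} of Theorem~\ref{ONSTX} with the sharp Pinsker inequality \eqref{Pinsker}. Theorem~\ref{ONSTX} already lower-bounds the Onofri deficit on the left-hand side by $\tfrac12 \inf_{v\in\mathcal{M}_O} H(e^v|e^u)$, so all that remains is to convert the relative entropy into an $L^1$ distance, which is exactly what Pinsker's inequality provides.

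First I would record that both $e^u$ and $e^v$ are genuine probability densities on $(S^2,\sigma)$: the normalization $\int_{S^2}e^u\d\sigma = 1$ is assumed, and every $v\in\mathcal{M}_O$ satisfies $\int_{S^2}e^v\d\sigma = 1$ by construction of $\mathcal{M}_O$ in \eqref{Mspec}. Hence, for each fixed $v\in\mathcal{M}_O$, Pinsker's inequality \eqref{Pinsker} applied with $(X,\mathcal{B},\nu) = (S^2,\mathcal{B},\sigma)$, $\rho_1 = e^v$ and $\rho_0 = e^u$ gives
\begin{equation*}
H(e^v|e^u) \geq \frac12 \|e^v - e^u\|_1^2\ .
\end{equation*}

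Next I would take the infimum over $v\in\mathcal{M}_O$ on both sides, using that the infimum of the larger quantity dominates the infimum of the smaller, to get $\inf_{v\in\mathcal{M}_O} H(e^v|e^u) \geq \tfrac12 \inf_{v\in\mathcal{M}_O}\|e^u - e^v\|_1^2$. Substituting this into \eqref{ONSTX1B} yields
\begin{equation*}
\frac{1}{4}\int_{S^2}|\nabla u|^2 \d \sigma - \ln\left(\int_{S^2}e^u\d \sigma\right) + \int_{S^2} u \d \sigma \geq \frac12 \inf_{v\in\mathcal{M}_O} H(e^v|e^u) \geq \frac14 \inf_{v\in\mathcal{M}_O}\|e^u - e^v\|_1^2\ ,
\end{equation*}
which is the claimed bound.

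There is essentially no hard step here: the analytic content lives entirely in Theorem~\ref{ONSTX} (which in turn rests on the Gui--Moradifam refinement together with the reduction via Corollary~\ref{ONCR}) and in the already-established sharp constant in Pinsker's inequality. The only points requiring a word of care are the two bookkeeping facts noted above: that the densities are normalized so Pinsker applies, and that the relative entropy in \eqref{ONSTX1B} is oriented as $H(e^v|e^u)$, matching the roles $\rho_1 = e^v$, $\rho_0 = e^u$ in \eqref{Pinsker}. With these in place the corollary follows in two lines.
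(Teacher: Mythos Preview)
Your proof is correct and is exactly the paper's approach: the paper's proof of this corollary is the single sentence ``Simply apply Pinsker's inequality \eqref{Pinsker} to \eqref{ONSTX1B},'' which is precisely what you do, with the added care of checking that $e^u$ and $e^v$ are normalized probability densities and that the orientation of the relative entropy matches.
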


\begin{proof} Simply apply Pinsker's inequality \eqref{Pinsker} to \eqref{ONSTX1B}.
\end{proof}

\section{The log HLS Inequality on $S^2$}

Let $E$ denote the space of real valued functions in  $L^\infty(S^2)$ modulo the constants, so that for $u\in E$, 
$\|u\|_E = \inf_{c\in \R}\|u -c\|_\infty$.  Let $E^*$ denote the subspace of $L^1(S^2)$ consisting of real valued functions 
$f$ such that  $\int_{S^2} f\d \sigma = 0$. With the bilinear form $\langle u,f\rangle := \int_{S^2}u f \d \sigma$, $E$ and $E^*$ are a dual pair of Banach spaces.

For $u\in E$, define
$$
\mathcal{F}(u) = \begin{cases} {\displaystyle \frac14\int_{S^2}|\nabla u|^2\d \sigma} & u\in H^1(S^2)\\
\phantom{E}\quad +\infty & u\notin H^1(S^2)\ .
\end{cases}
$$
It is easy to see that $\mathcal{F}$ is a proper lower semicontinuous convex function on $E$.

Define the kernel $G(\omega,\omega')$ on $S^2\times S^2$ by
$$
G(\omega,\omega') :=-2\log (\omega - \omega')\ .
$$
It is easy to see that 
$$f\mapsto 2\int_{S^2}\int_{S^2} f(\omega)G(\omega,\omega')f(\omega)\d \sigma \d \sigma =: \mathcal{F}^*(f)$$ 
is well defined on $E$, with values in $(-\infty,\infty]$.
In fact, $G$ is the Greens function for $-\Delta$ on $S^2$:  For $f\in L^2(S^2)$ with $\int_{S^2}f\d \sigma  =0 $, define 
$Gf(\omega) := \int_{S^2}G(\omega, \omega')f(\omega')\d \sigma$. Then $-\Delta Gf = f$. 
Since $-\Delta \geq 2 I$, on the orthogonal complement of the constants in $L^2(S^2)$, it is not hard to show that
$\mathcal{F}$ is a proper  lower semicontinuous strictly convex function on $E$. 

In fact, as the notation suggests, $\mathcal{F}^*$ is the Legendre transform of $\mathcal{F}$, {\em vice-versa}.  To see that 
 $\mathcal{F}^*$ is the Legendre transform of $\mathcal{F}$, compute

\begin{eqnarray*}
\langle u,f\rangle - \mathcal{F}(u)  &=& \langle u,f\rangle - \frac14 \langle (-\Delta)^{1/2}u, (-\Delta)^{1/2}u \rangle \\
&=& 2\langle (-\Delta)^{-1/2}f,(-\Delta)^{-1/2}f\rangle  - \frac14 \left\|(-\Delta)^{1/2}u - 2(-\Delta)^{-1/2}f\right\|_{L^2(S^2)}\\
&\leq& 2\langle f, (-\Delta)^{-1} f\rangle = \mathcal{F}^*(f)\ ,
\end{eqnarray*} 
and there is equality for $u = -2\Delta^{-1} f$. 
Then since both $\mathcal{F}$ and $\mathcal{F}^*$ are proper lower semicontinuous convex functions, the fact that the Legendre transform of 
$\mathcal{F}^*$ is a consequence of the Fenchel-Moreau Theorem, though it is of course easy to prove directly by completing a square, as above; see \cite{CL92}. 

Next, define the functional $\mathcal{E}$ on $E$ by 
$$
\mathcal{E}(u) = \log\left(\int_{S^2}e^u \d \sigma\right) - \int_{S^2} u \d \sigma\ .
$$
For $f\in E^*$ and $u\in E$, 
\begin{eqnarray*}
\langle u,f\rangle - \mathcal{E}(u)   = \langle u,f+1 \rangle - \log\left(\int_{S^2}e^u \d \sigma\right)\ .
\end{eqnarray*}
Suppose $f+1 < 0$ on a set $A$ of positive measure $|A|$. Let $u := a 1_A$ for $a < 0$. Then
\begin{eqnarray*}
 \langle u,f+1 \rangle - \log\left(\int_{S^2}e^u \d \sigma\right) &=& a\int_{S^2} g\d \sigma -\log\left( \int_{S^2} e^{a1_A} \right)\\
 &=& a\int_{S^2} g\d \sigma -\log\left( e^a|A| + (1-|A|) \right)\\
 &\geq&  a\int_{S^2} g\d \sigma -\log\left(1-|A| \right)\ .
\end{eqnarray*}
Taking the limit $a\mapsto -\infty$, we obtain $\mathcal{E}^*(f) = +\infty$. On the other hand, if $f+1\geq 0$, since $\int_{S^2}(f+1)\d \sigma = 1$, 
we have from \eqref{DV2} that
$$
\sup_{u\in L^\infty(S^2)}\left\{ \int_{S^2} u(f+1)\d \sigma - \log\left(\int_{S^2} e^u \d \sigma\right) \right\} = \begin{cases} \int_{S^2} (f+1)\log (f+1)\d \sigma & \int_{S^2}f\d \sigma =0\\
\phantom{X}\quad +\infty &  \int_{S^2}f\d \sigma \neq 0\end{cases}\ .
$$
That is
$$
\mathcal{E}^*(f) = 
\begin{cases} \int_{S^2} (f+1)\log (f+1)\d \sigma & \int_{S^2}f\d \sigma =\ ,\ f+1 \geq 0\\
\phantom{X}\quad +\infty &  \phantom{X}\quad{\rm otherwise}\end{cases}\ .
$$

Since the functional over which the supremum is taken does not change when a constant is added to $u$, it is a functional on $E$, and we may take the supremum over $E$ instead.  For the same reason, we may suppose that $\int_{S^2}e^u= 1$, and then 
$$\mathcal{E}^*(f) =  \sup_{u\in L^\infty(S^2)}\left\{ \int_{S^2} u(f+1)\d \sigma \ :\   \int_{S^2}e^u= 1\ \right\}$$
However,
$$
\mathcal{E}^*(f)  - \int_{S^2} u(f+1)\d \sigma = \int_{S^2} \left( \frac{f+1}{e^u}\right)\log \left( \frac{f+1}{e^u}\right) e^u \d \sigma\ ,
$$
and by Jensen's inequality, this is non negative and equals zero only for $u = \log (f+1)$. 

This shows that when  $\mathcal{E}(u) < \infty$ and   $\mathcal{E}^*(f) < \infty$, we have equality in Young's Inequality; that is, 
$$
\langle u,f\rangle =  \mathcal{E}(u) + \mathcal{E}^*(f)
$$
if and only if $u = \log (f+1)$. 
It follows that $\partial \mathcal{E}(u) = \{\log(f+1)\}$ and $\partial \mathcal{E}^*(f) = \{e^u-1\}$.

Now the Onofri inequality can be written as $\mathcal{E} \leq \mathcal{F}$. The dual inequality is $\mathcal{E}^* \geq \mathcal{F}^*$
is
\begin{equation*}%\label{logHLSS1}
\int_{S^2} (f+1)\log (f+1)\d \sigma \geq -2\int_{S^2}\int_{S^2} f(\omega)\log|\omega - \omega'| f(\omega')\d\sigma \d \sigma\ ,
\end{equation*}
valid for all $f\in L^1(S^2)$ such that $\int_{S^2}f\d \sigma =0$ and $f+1 \geq 0$. 

Define the spherical ``Helmholz free energy'' $\mathcal{H}_S$ on $E^*$ by
\begin{equation*}%\label{HSPD}
\mathcal{H}_S(f) = \int_{S^2} (f+1)\log (f+1)\d \sigma +2\int_{S^2}\int_{S^2} f(\omega)\log|\omega - \omega'| f(\omega')\d\sigma \d \sigma 
\end{equation*} 
in case $f+1 \geq 0$, and $\mathcal{H}_S(f) = \infty$ otherwise.  The spherical log HLS inequality is $\mathcal{H}_S(f) \geq 0$, and by Theorem~\ref{dualsharp}, there is equality if and only if $f+1 = e^u$ with $u\in \mathcal{M}_O$, where $\mathcal{M}_O$ is specified in \eqref{Mspec}. 
We now prove stability for this inequality.

\begin{thm} For all $f\in E$, $f+1\geq 0$, 
\begin{equation*}%\label{SHLSSI}
\mathcal{H}_S(f) \geq \frac{1}{8}\inf_{v\in \mathcal{M}}\left\{\|(f+1) - g\|_1^2 \right\}\ .
\end{equation*}
\end{thm}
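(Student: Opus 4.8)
The plan is to specialize the duality scheme of Section~2 to the pair $(\mathcal{E},\mathcal{F})$, using the $\tfrac12$-convexity of the entropy from Theorem~\ref{STRYO} as the ``strong convexity'' input and the Onofri stability of Corollary~\ref{L1STAB} as the ``primal'' stability input. Write $\rho := f+1$, a probability density on $S^2$, so that $\mathcal{H}_S(f) = \mathcal{E}^*(f) - \mathcal{F}^*(f)$ with $\mathcal{E}^*(f) = H(\rho)$. If $\mathcal{F}^*(f) = +\infty$ the claim is vacuous, so assume $\mathcal{F}^*(f) < \infty$ and let $u^* := -2\Delta^{-1}f$ be the potential realizing equality in Young's inequality for $\mathcal{F}$, i.e. $\langle u^*, f\rangle - \mathcal{F}(u^*) = \mathcal{F}^*(f)$, as established by the completing-the-square computation preceding the definition of $\mathcal{H}_S$. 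Finiteness of $\mathcal{F}^*(f)$ forces $u^* \in H^1(S^2)$, and then the Moser--Trudinger inequality gives $e^{u^*} \in L^1(\sigma)$, so the probability density $\rho_* := e^{u^*}/\int_{S^2}e^{u^*}\d \sigma$ is well defined and of finite entropy.

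First I would insert $\rho_*$ into the $\tfrac12$-convexity estimate \eqref{SCONRE} for the entropy, taking $\rho_0 = \rho_*$ and $\rho_1 = \rho$. Since $\log \rho_* = u^* - \log\int_{S^2}e^{u^*}\d \sigma$ and both $\rho,\rho_*$ integrate to $1$, the additive constant drops out of the cross term, which becomes $\int_{S^2}(\rho - \rho_*)u^*\d \sigma$; after cancelling the entropy of $\rho_*$ this rearranges to
\[
\mathcal{E}^*(f) \;\geq\; \langle u^*, f\rangle - \mathcal{E}(u^*) + \tfrac12\|\rho - \rho_*\|_1^2 .
\]
Invoking \eqref{SCONRE} directly, rather than the $L^\infty$ statement of Theorem~\ref{STRYO}, is what accommodates the merely $H^1$ test function $u^*$; the finiteness of $\int_{S^2}u^*\rho\,\d \sigma$ and $\int_{S^2}u^*\rho_*\,\d \sigma$ follows once more from Moser--Trudinger together with $H(\rho)<\infty$. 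Substituting the Young equality $\langle u^*, f\rangle = \mathcal{F}^*(f) + \mathcal{F}(u^*)$ then yields
\[
\mathcal{H}_S(f) \;\geq\; \big(\mathcal{F}(u^*) - \mathcal{E}(u^*)\big) + \tfrac12\|\rho - \rho_*\|_1^2 .
\]

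Next I would bound the Onofri deficit $\mathcal{F}(u^*) - \mathcal{E}(u^*)$ from below. Applying Corollary~\ref{L1STAB} to $\tilde u := u^* - \log\int_{S^2}e^{u^*}\d \sigma$ (which leaves the deficit unchanged, being constant-shift invariant, and satisfies $e^{\tilde u} = \rho_*$) gives $\mathcal{F}(u^*) - \mathcal{E}(u^*) \geq \tfrac14\inf_{v\in\mathcal{M}_O}\|\rho_* - e^{v}\|_1^2$. Writing $a := \|\rho - \rho_*\|_1$ and $b := \inf_{v\in\mathcal{M}_O}\|\rho_* - e^{v}\|_1$, the two displays give $\mathcal{H}_S(f) \geq \tfrac12 a^2 + \tfrac14 b^2 \geq \tfrac14(a^2 + b^2) \geq \tfrac18(a+b)^2$. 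Picking a near-minimizer $e^{v_0}$ for $b$ and using the triangle inequality $a + b \geq \|\rho - e^{v_0}\|_1 \geq \inf_{v\in\mathcal{M}_O}\|\rho - e^{v}\|_1$ then produces $\mathcal{H}_S(f) \geq \tfrac18\inf_{v\in\mathcal{M}_O}\|(f+1) - e^{v}\|_1^2$, which is the assertion (the optimizers $g$ being exactly the densities $e^{v}$, $v\in\mathcal{M}_O$).

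The main obstacle is the regularity bookkeeping around $u^*$: it is only guaranteed to lie in $H^1(S^2)$, not in $L^\infty$, so Theorem~\ref{STRYO} cannot be quoted verbatim. I would circumvent this with the pointwise convexity inequality \eqref{SCONRE}, which holds for arbitrary probability densities, and control every cross term through the Moser--Trudinger inequality exactly as in the proof of Lemma~\ref{closest}, recording along the way that $\rho$ and $\rho_*$ both have finite entropy so that each expression above is finite. The one identity on which the whole argument hinges is $\langle u^*, f\rangle - \mathcal{F}(u^*) = \mathcal{F}^*(f)$ with $u^* = -2\Delta^{-1}f$: it is this exact cross-term cancellation that converts a generic lower bound into the clean sum of the two squared $L^1$ gains, so I would state and justify it carefully before assembling the estimate.
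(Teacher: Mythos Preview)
Your proposal is correct and follows essentially the same route as the paper: both arguments feed the entropy $\tfrac12$-convexity (Theorem~\ref{STRYO}/\eqref{SCONRE}) and the Onofri $L^1$-stability (Corollary~\ref{L1STAB}) into the duality scheme, combine $\tfrac12 a^2 + \tfrac14 b^2 \geq \tfrac18(a+b)^2$, and finish with the triangle inequality. Your version is in fact more explicit than the paper's, since you pin down the test function $u^* = -2\Delta^{-1}f$ (which the paper leaves implicit when it invokes ``$\cF^*(f) \geq \langle u,f\rangle - \cF(u)$'') and you flag the $H^1$-versus-$L^\infty$ regularity issue in applying Theorem~\ref{STRYO}, resolving it by appealing directly to \eqref{SCONRE} together with Moser--Trudinger.
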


\begin{proof} By Theorem~\ref{STRYO},
\begin{equation}\label{pr1X}
\cE(u) + \cE^*(f) \geq \langle u,f\rangle +  \frac12\| f - (e^u-1)\|_1^2 \ .
\end{equation}

Corollary~\ref{L1STAB} provides the stability inequality
\begin{equation}\label{pr2X}
\cF(u) - \cE(u) \geq  \frac{1}{4} \|e^u - e^{u_0}\|_1^2\ .
\end{equation}
Summing both sides of \eqref{pr1X} and  \eqref{pr2X}, rearranging terms, and using the fact that  $\cF^*(f) \geq \langle u,f\rangle - \cF(u)$ for all $x$, yields
\begin{eqnarray*}%\label{pr3}
\cE^*(f) - \cF^*(f)  &\geq&  \frac{1}{4}\left( \| e^u- e^{u_0} \|_1^2 +  \| (f+1)- e^u \|_1^2\right)\nonumber \\
&\geq&  \frac{1}{8}\left( \| e^u- e^{u_0} \|_1 +  \| (f+1)- e^u \|_1\right)^2\nonumber \\
&\geq&  \frac{1}{8} \| (f+1)- e^{u_0} \|_1^2 \ . 
\end{eqnarray*}
\end{proof}

\section{Stability for the log HLS inequality on $\R^2$}

The stability result proved in the last section for the  log HLS inequality on the sphere transfers directly to log HLS inequality on the plane through the stereographic projection $\mathcal{S}$. As observed by Lieb,  for $x,x'\in \R^2$
\begin{equation*}%\label{STERKER}
|\mathcal{S}^{-1}(x) - \mathcal{S}^{-1}(x') |^2 = \frac{4}{(1+|x|^2)(1+|x'|^2)}|x-x'|^2\ .
\end{equation*}
This may be verified by a  straightforward computation, but it is the key to the conformal invariance of the HLS inequality that Lieb discovered. 
It is then also the key to the conformal invariance of the log HLS inequality.

As before, let $h(x) = \left(\frac{2}{1+|x|}\right)^2$ be the Jacobian of  $\mathcal{S}$.  Given a function 
$\varphi \in L^1(\R^2)$, define the function $T\varphi$ by
$$
T\varphi := \pi(\varphi/h)\circ \mathcal{S}. 
$$
Since $h>0$, 
$$\int_{S^2} |T\varphi|\d \sigma = \pi\int_{\R^2} (|\varphi(x)|/h(x))\d \mu = \int_{\R^2}|\varphi|\d x \ .$$
 It follows that the map $T: \varphi \mapsto T\varphi$ is an isometry from $L^1(\R^2)$ onto $L^1(S^2)$, and hence $T$ preserves 
integrals, and again because $h>0$, $T$ preserves positivity. 
In particular, if $\varphi \geq 0$ and  $\int_{\R^2}\varphi \d x =1$, then  $T\varphi \geq 0$ and $\int_{S^2}(T\varphi -1)\d \sigma = 0$.  We are now ready to prove Theorem~\ref{main}

\begin{proof}[Proof of Theorem~\ref{main}]  Consider any $\rho\geq 0$ on $\R^2$ such that $\int_{\R^2} |\rho(x)\log(e+|x|^2)\d x < \infty$ and such that $\int_{\R^2} \rho \d x = 1$. 
Define a function $f$ on $S^2$ by  $f := T\rho -1$. Then by what has been observed just above, $f\in L^1(S^2)$, 
$\int_{S^2}f\d \sigma =0$, and $f+1\geq 0$.

Then with $f$ and $\varphi$ related in this way, one may check that
\begin{multline*}
\int_{S^2}(f+1)\log(f+1)\d \sigma + 2\int_{S^2}\int_{S^2}f(\omega)\log|\omega - \omega'| f(\omega')\d \sigma \d \sigma = \\
\int_{\R^2}\varphi \log \varphi \d \sigma + 2\int_{\R^2}\int_{\R^2}\varphi(x)\log|x - x'| \varphi(x')\d x \d x' + (1+ \log \pi)\ . \\
\end{multline*} 
Finally $\|(f+1)- e^{u_0} \|_1 = \|\rho - T^{-1}e^{u_0}\|_1$, and $T^{-1}e^{u_0}$  is an optimizer for the Euclidean  log HLS inequality. 
Therefore, 
$$
\int_{\R^2}\rho \log \rho \d \sigma + 2\int_{\R^2}\int_{\R^2}\rho(x)\log|x - x'| \rho(x')\d x \d x' + (1+ \log \pi)  \geq \frac{1}{8}\inf_{\psi\in \mathcal{M}}\{ \|\rho - \psi\|_1^2\}\ .
$$
\end{proof}

\section{The one dimensional case}

After the first version of this work was complete, Rupert Frank pointed out to me  that the analog on $S^1$  of the result of Gui and Moradifam on $S^2$ 
had been proved by Osgood, Philips and Sarnak \cite{OPS}, and noted that it could be employed in the same manner to obtain stability results on 
$S^1$. In this section, we briefly develop this remark. 

 To state the inequality of \cite{OPS}, let ${\rm d}\sigma$ denote the normalized uniform measure on $S^1$. 
For continuous $u$ on $S^1$, define $\|(-\Delta)^{1/4}u\|_2^2 := \sum_{k\in \Z} |k| |\widehat{u}(k)|^2$ where $\widehat{u}(k)$ is the $k$th Fourier coefficient of $u$, and then extend this definition by continuity to obtain the Sobolev space $H^{1/2}(S^1)$. The inequality of \cite{OPS} is that all  function $u$ on $S^1$
 that satisfy 
 the constraint
\begin{equation}\label{onofineqRTCZ}
\int_{S_1}e^{u(\omega)} \omega\cdot{\bf n}{\rm d}\sigma = 0 \quad{\rm for\ all}\ {\bf n}\in S^1
\end{equation}
satisfy
\begin{equation}\label{onofineqRTZ}
\frac 14 \int_{S^1}|(-\Delta)^{1/4}u|^2{\rm d}\sigma \geq \log\left(\int_{S^1}e^u {\rm d}\sigma\right) - \int_{S_1}u{\rm d}\sigma\ .
\end{equation}
These are the analogs for $S^1$ of \eqref{onofineqRTC} and \eqref{onofineqRT} for $S^2$. As Widdom noted \cite{W88}, 
the constrained inequality proved in \cite{OPS} is one a a family of  constrained inequalities that may be obtained very simply from results of Szeg\"o \cite{GS58} on Toeplitz forms.

If the constant $\tfrac14$ in \eqref{onofineqRTZ} is replaced by 
$\tfrac12$,  yielding
\begin{equation}\label{onofineqRTZ2}
\frac 12 \int_{S^1}|(-\Delta)^{1/4}u|^2{\rm d}\sigma \geq \log\left(\int_{S^1}e^u {\rm d}\sigma\right) - \int_{S_1}u{\rm d}\sigma\ ,
\end{equation}
the inequality is valid, and sharp,  without the constraint \eqref{onofineqRTCZ}.  This is an inequality 
of Lebedev and Milin (see \cite{OPS}), and a proof by competing symmetries, with determination of the cases of equality, 
may be found in \cite{CL92}. That is, the Lebedev-Milin inequality is the $S^1$ analog of the Onofri inequality on $S^2$.  
Exactly as explained in Section 4,
\eqref{onofineqRTZ2} may be written in terms of relative entropy as
\begin{equation}\label{onofineqRTZ6}
 \int_{S^1}|(-\Delta)^{1/4}u|^2{\rm d}\sigma \geq  2 H(1||e^u)\ .
\end{equation}
The inequality \eqref{onofineqRTZ} then yields 
\begin{equation}\label{onofineqRTZ5}
\frac 12 \int_{S^1}|(-\Delta)^{1/4}u|^2{\rm d}\sigma - \log\left(\int_{S^1}e^u {\rm d}\sigma\right) +  \int_{S_1}u{\rm d}\sigma \geq 
\frac 14 \int_{S^1}|(-\Delta)^{1/4}u|^2{\rm d}\sigma\ 
\end{equation} 
for all $u$ satisfying the constraint \eqref{onofineqRTCZ}.
It is a simple matter to adapt the proof of Corollary~\ref{ONCR} to show that for all functions $u$ on $S^1$ such that
$\int_{S^1}e^{u}\d \sigma = 1$ and $\|(-\Delta)^{1/4} u|\|_2< \infty$,
 there exists a conformal transformation $\tau$ on $S^1$ (see \cite{CL92})  such that with $U_\tau = u\circ \tau + \log(J_\tau)$,  
\begin{equation}\label{ONOPT2Z}
\int_{S^1}e^{U_\tau(\omega)} \omega \cdot {\bf n}\d \sigma = 0
\end{equation}
for all ${\bf n}\in S^1$.  Applying the inequality \eqref{onofineqRTZ}  followed by \eqref{onofineqRTZ6} yields
\begin{eqnarray}\label{onofineqRT3Z}
\frac{1}{2}\int_{S^1}|(-\Delta)^{1/4} U_\tau|^2 \d \sigma - \ln\left(\int_{S^1}e^{U_\tau} \d \sigma\right) +
 \int_{S^1} U_\tau \d \sigma &\geq& \frac{1}{4}\int_{S^1}|(-\Delta)^{1/4}  U_\tau |^2 \d \sigma\nonumber\\
 &\geq& \frac12H(1||e^{U_\tau})\ .
\end{eqnarray}
Proceeding from here exactly as in Section 4 yields a stability bound for the inequality of \cite{OPS}:
\begin{cor}\label{L1STABS1} Let $u$ be a  function on $S^1$ such that $\int_{S^2}e^{u}\d \sigma = 1$ and $\int_{S^2}|(-\Delta)^{1/4} u|^2\d \sigma < \infty$. Then
\begin{equation*}%\label{ONSTX1BB}
\frac{1}{2}\int_{S^1}|(-\Delta)^{1/4} u|^2 \d \sigma - \ln\left(\int_{S^1}e^u\d \sigma\right) + \int_{S^1} u \d \sigma \geq 
\frac{1}{4} \inf_{v\in \mathcal{M}_O} \left\{\ \|e^u - e^v\|_1^2\ \right\}\ .
\end{equation*}
\end{cor}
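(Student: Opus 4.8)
The plan is to mirror the $S^2$ argument of Section~4 essentially verbatim, now taking the already-established inequality \eqref{onofineqRT3Z} as the starting point. The two ingredients that make the transfer work on $S^1$ are the $S^1$ counterparts of the conformal invariances used in Section~4, both recorded in \cite{CL92}: first, the Lebedev--Milin deficit functional
\begin{equation*}
w \;\longmapsto\; \tfrac12\int_{S^1}|(-\Delta)^{1/4}w|^2\d \sigma - \log\Big(\int_{S^1}e^w\d \sigma\Big) + \int_{S^1}w\d \sigma
\end{equation*}
is invariant under the action $w\mapsto w\circ\tau + \log(J_\tau)$ of the conformal group of $S^1$ (the $S^1$ analog of the identities \eqref{COINVA}--\eqref{COINVB}); and second, the relative entropy $H(\rho_1|\rho_2)$ between two probability densities is invariant under simultaneous conformal transport of $\rho_1$ and $\rho_2$, since $\rho_1/\rho_2$ is a scalar and $\rho_1\d \sigma$ an invariant measure.

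With $U_\tau = u\circ\tau + \log(J_\tau)$ chosen as in \eqref{ONOPT2Z} so that $U_\tau$ satisfies the constraint \eqref{onofineqRTCZ}, the left side of \eqref{onofineqRT3Z} equals the Lebedev--Milin deficit of $u$ itself by the first invariance. For the right side, set $v_0 := \log(J_{\tau^{-1}}) \in \mathcal{M}_O$; using the chain-rule identity $\log(J_{\tau^{-1}})\circ\tau = -\log(J_\tau)$, this is exactly the optimizer carried by $\tau$ to the constant $0$, so the pair of densities $(e^{v_0},e^u)$ is transported by $\tau$ to $(1,e^{U_\tau})$. By invariance of relative entropy, $H(1||e^{U_\tau}) = H(e^{v_0}||e^u)$, and \eqref{onofineqRT3Z} becomes the $S^1$ analog of \eqref{ONSTX1B}:
\begin{equation*}
\tfrac12\int_{S^1}|(-\Delta)^{1/4}u|^2\d \sigma - \log\Big(\int_{S^1}e^u\d \sigma\Big) + \int_{S^1}u\d \sigma \;\geq\; \tfrac12 H(e^{v_0}||e^u) \;\geq\; \tfrac12\inf_{v\in\mathcal{M}_O}H(e^v||e^u)\ .
\end{equation*}

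Finally I would apply Pinsker's inequality \eqref{Pinsker} in the form $H(e^{v_0}||e^u)\geq \tfrac12\|e^{v_0}-e^u\|_1^2$ to this specific $v_0$ and then pass to the infimum, obtaining
\begin{equation*}
\tfrac12\int_{S^1}|(-\Delta)^{1/4}u|^2\d \sigma - \log\Big(\int_{S^1}e^u\d \sigma\Big) + \int_{S^1}u\d \sigma \;\geq\; \tfrac14\|e^{v_0}-e^u\|_1^2 \;\geq\; \tfrac14\inf_{v\in\mathcal{M}_O}\|e^u-e^v\|_1^2\ ,
\end{equation*}
which is precisely the claimed bound. The only genuine obstacle is verifying the conformal invariance of the Lebedev--Milin functional and of the relative entropy on $S^1$; once these are in hand (they are the literal $S^1$ counterparts of the $S^2$ identities of Section~4 and are established in \cite{CL92}), the remaining steps are the same algebraic rearrangement together with one application of Pinsker, with no new difficulty.
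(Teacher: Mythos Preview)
Your proposal is correct and follows essentially the same route as the paper: the paper simply says ``Proceeding from here exactly as in Section~4,'' meaning one transfers \eqref{onofineqRT3Z} back to $u$ via the conformal invariance of the Lebedev--Milin deficit and of the relative entropy (the $S^1$ versions of the identities behind \eqref{ONSTX1B}), and then applies Pinsker's inequality exactly as in Corollary~\ref{L1STAB}. Your write-up makes these two invariance ingredients explicit, which is precisely what the paper's ``exactly as in Section~4'' is pointing to.
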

One can also prove an anlog of \eqref{ONSTX1} if desired,  but Corllary~\ref{L1STABS1} is what is relevant to stability for the log HLS inequality on $S^1$. 

The sharp logarithmic HLS inequality in one dimension, both on $S^1$ and $\R$, is proved as a special case of \cite[Theorem 1]{CL92}, and is related there to  Lebedev-Milin inequality by Legendre transforms. Proceeding from Corollary~\ref{L1STABS1} exactly as in this paper leads to the one dimensional version of Theorem~\ref{main}, in which the constant is now $\tfrac{1}{4}$ in place of $\tfrac{1}{8}$.

\medskip

\noindent{\bf Acknowledgements} I am grateful for useful conversations with Jean Dolbeault, Rupert Frank and Michael Loss. In particular, 
I am grateful to Jean Dolbeault for bringing the work of Gui and Moradifam 
\cite{GM18} to my attention and for suggesting to me that it could be the source of a 
stability inequality for Onofri's inequality. Though such a stability result has been recently proved in a more general setting by 
Chen, Lu and Tang \cite{CLT23}, the
more explicit result obtained starting from \cite{GM18} is well-adapted to our purpose.  I had originally planned to prove a stability 
result for Onofri's inequality using the Bianchi-Egnell approach \cite{BE91}, using the recent advances in \cite{DEFFL} that yield explicit constants.
In fact, I had earlier begun an investigation of stability for Onofri's inequality with Francis Seuffert, a former student who wrote a thesis on stability 
problems \cite{S17}, some years ago when he was a postdoc. 
We had arrived at an explicit ``local bound'', that is, a stability estimate near the manifold of optimizers. It seems quite certain one could obtain an 
explicit stability bound from here using the methods of \cite{DEFFL}. However, the result of \cite{CLT23} eliminates the main motivation for 
carrying this out, and the result of  Gui and Moradifam leads to a very clean result.

\end{document}